\theoremstyle{plain}
\newtheorem{theorem}{Theorem}[section]
\newtheorem{proposition}[theorem]{Proposition}
\newtheorem{corollary}[theorem]{Corollary}
\newtheorem{lemma}[theorem]{Lemma}
\newtheorem{remark}[theorem]{Remark}
\theoremstyle{definition}
\newtheorem{definition}[theorem]{Definition}
\newtheorem{example}[theorem]{Example}
\crefname{definition}{Definition}{Definitions}
    \newcommand{\fm}[1]{\textbf{#1}} % style for first mentions (in definition)
    \renewcommand{\sf}[1]{\textsf{#1}}
    \newcommand{\cat}[1]{\mathbf{#1}}
    \newcommand{\Set}{\cat{Set}}
    \newcommand{\Cat}{\cat{Cat}}
    \newcommand{\Aut}{\cat{Aut}}
    \newcommand{\Comon}{\cat{Comon}}
    \newcommand{\Comod}{\cat{Comod}}
    \newcommand{\op}{\sf{op}}
    \newcommand{\var}[1]{\mathcal{#1}}
    \newcommand{\A}{\var{A}}
    \newcommand{\B}{\var{B}}
    \newcommand{\D}{\var{D}}
    \newcommand{\V}{\var{V}}
    \newcommand{\1}{\mathbf{1}}
    \newcommand{\II}{\mathbbm{1}}
    \tikzset{% for drawing adjunctions
        symbol/.style={%
            draw=none,
            every to/.append style={%
                edge node={node [sloped, allow upside down, auto=false]{$#1$}}}
        }
    }
    \newcommand{\xto}[1]{\xrightarrow{#1}}
    \newcommand{\braid}{\mathbbm{x}}
    \renewcommand{\cot}[1]{\underset{#1}{\diamond}}
\begin{document}
\title{Smash Products for Non-cartesian Internal Prestacks}
\author{Liang Ze Wong}
\date{}

\maketitle

\begin{abstract}
	The smash product construction (or the Grothendieck construction) takes a functor (or prestack) $F \colon B^\op \to \Cat$ and returns a fibration $p \colon A \to B$.
	In this paper, we develop an analogue of the smash product for prestacks internal to a non-cartesian monoidal category.
	Our construction simultaneously generalizes the Grothendieck construction for prestacks and smash products for $B$-module algebras over a bialgebra $B$.
	Further, taking fibers or coinvariants allows one to recover the original prestack.
\end{abstract}

\tableofcontents

\section{Introduction}
	Given a group $G$ acting on another group $A$ via a homomorphism $\varphi \colon G \to \Aut(A)$, we may form the semi-direct product $A \rtimes_\varphi G$, or simply $A \rtimes G$.
	There is also a projection $\pi \colon A \rtimes G \twoheadrightarrow G$, and taking the kernel of $\pi$ allows us to recover $A$.
	This paper synthesizes two classical generalizations of the semi-direct product.

	The first is the \emph{Grothendieck construction} \cite{grothendieck1961categories}.
	Instead of a group $G$ acting on another group $N$, we now have a category $\B$ acting on a family of other categories $\{\A_b\}_{b \in \B}$ via a functor $\varphi \colon \B^\op \to \Cat$ sending $b$ to $\A_b$. 
	Such functors are also known as \emph{(split) prestacks}.
	The Grothendieck construction then takes a split prestack and returns a \emph{fibration} $\pi \colon \A \rtimes \B \to \B$ whose \emph{fibers} allow us to recover the categories $\A_b$ that we started with.

	The second generalization is the \emph{smash product} construction \cite{cohen1984group}.
	This time, instead of a group acting on another group, we start with a group $G$ acting on a $k$-algebra $A$.
	We may then form the smash product $A \rtimes G$ (or $A \# G$), which is another $k$-algebra.
	Instead of an algebra homomorphism $A \rtimes G \to G$,  we have a $G$-grading on $A \rtimes G$ whose identity component is the original algebra $A$; equivalently, we have a $kG$-comodule algebra $A \rtimes G$ whose \emph{coinvariant} subalgebra is $A$.
	More generally, given a Hopf algebra $H$ acting on another algebra $A$ (i.e.\ a $H$-module algebra), we may form the smash product $A \rtimes H$ which is a $H$-comodule algebra, and taking the coinvariant subalgebra of $A \rtimes H$ allows us to recover $A$ \cite{blattner1985duality,van1984duality}.
	Although the antipode of the Hopf algebra $H$ is used in the definition of the smash product, it is not actually \emph{required}: we may in fact form the smash product $A \rtimes B$ for a bialgebra $B$ acting on $A$, which coincides with the usual smash product if $B$ is a Hopf algebra.

	The starting point of this paper is the observation that categories $\B$ and bialgebras $B$ are both examples of \emph{internal categories} \cite{aguiar1997internal}.
	In fact, they are \emph{comonoidal} internal categories (which we define in \Cref{sec:comonoidal-internal-cat}), and we may thus define \emph{comodule categories} and \emph{prestacks} over them (\Cref{sec:internal-prestacks}).
	In \Cref{sec:smash-products}, we define \emph{smash products} of prestacks, and in \Cref{sec:coinvariants} we show that taking coinvariants allows us to recover the original prestack.
	Some necessary lemmas regarding comonoids and comodules will be provided in \Cref{sec:comonoids-comodules}.

	The reader might find many of the statements and proofs in this paper rather technical and unmotivated.
	This is because they were developed in the following manner:
	\begin{enumerate}
		\item Identify a notion for ordinary categories (i.e.\ categories internal to $\Set$);
		\item Define this notion for categories internal to an arbitrary monoidal category $\V$, in the language of comonoids and comodules;
		\item Prove the necessary statements using \emph{string diagrams};
		\item Transfer this proof into commutative diagrams.
	\end{enumerate}
	Consequently, the results and proofs that end up in this paper are already one step removed from the original \emph{method} of proof (string diagrams), and three steps removed from the original \emph{motivation} (ordinary category theory)! 
	Future versions of this paper might attempt to better motivate the results, and present them using string diagrams.
	For now, we encourage the reader to keep the original categorical constructions in mind and work out the statements and proofs for themselves in string diagrams.

\section{Comonoids and comodules} \label{sec:comonoids-comodules}
	In this section, we give a quick overview of comonoids and comodules.
	% The results in this section are rather technical, and the reader who is already familiar with comonoids and comodules may wish to skip to \S\ref{sec:comonoidal-internal-cat} and return to this section when necessary.
	Throughout, we assume that $(\V, \otimes, \1, \braid)$ is a symmetric monoidal category, where $\braid$ denotes the symmetry.
	We will further assume that $\V$ is \emph{regular} in the following sense:

	\begin{definition}[\cite{aguiar1997internal}*{Definition 2.1.1}] \label{def:regular}
		A monoidal category $(\V,\otimes, \1)$ is \fm{regular} if it has all equalizers, and $\otimes$ preserves them (in both variables).
        In other words, if $\begin{tikzcd} E \ar[r, rightarrowtail, "\mathsf{eq}"] & X \end{tikzcd}$ is the equalizer of
        $
            \begin{tikzcd}
                X \ar[r, "f", shift left] \ar[r, "g"', shift right] & Y,
            \end{tikzcd}
        $
        then $A \otimes E \otimes B \xto{\; A \otimes \mathsf{eq} \otimes B \;} A \otimes X \otimes B$ is the equalizer of 
        \[
            \begin{tikzcd}[column sep = large]
                A \otimes X \otimes B \ar[r, "A \otimes f \otimes B" shift left] \ar[r, "A \otimes g \otimes B"', shift right] & A \otimes Y \otimes B.
            \end{tikzcd}
        \]
	\end{definition}

	For $C,D$ comonoids in $\V$, let $_C\Comod_D$ denote the category of left $C$-, right $D$-bicomodules, or $(C,D)$-comodules.
	When either $C$ or $D$ is the monoidal unit $\1$, we write $_C\Comod := {}_C\Comod_\1$ and $\Comod_D := {}_1\Comod_D$.
	The maps in $_C\Comod_D$ are comodule maps respecting both the $C$ and $D$ coactions.
	More generally, we have:

	\begin{definition}
		Let $f \colon C \to D$ be a comonoid map, $M \in \Comod_C$ and $N \in \Comod_D$.

		A \fm{(comodule) map over $f$} is a map $\varphi \colon M \to N$ such that the diagram on the left commutes, where $\rho$ denotes the respective right coactions.
		\[
			\begin{tikzcd}[row sep = large]
				M \ar[r, "\varphi"] \ar[d, "\rho"'] & N \ar[d, "\rho"]
				\\
				M \otimes C \ar[r, "\varphi \otimes f"] & N \otimes D
			\end{tikzcd}
			\quad \quad \quad \quad  \quad
			\begin{tikzcd}[sep = large]
				M \ar[r,"\varphi"] \ar[d, "\rho"', dotted] & N \ar[d, "\rho", dotted]
				\\
				C \ar[r, "f"] & D
			\end{tikzcd}			
		\]
		We use the diagram on the right as an abbreviation of the diagram on the left.	
		In particular, the dotted arrows indicate that $M$ has a $C$-coaction and $N$ has a $D$-coaction.
		% We will say that the following diagram commutes if the above diagram does:
		% \[
		% 	\begin{tikzcd}[sep = large]
		% 		M \ar[r,"\varphi"] \ar[d, "\rho"', dotted] & N \ar[d, "\rho", dotted]
		% 		\\
		% 		C \ar[r, "f"] & D
		% 	\end{tikzcd}
		% \]
		In the special case where $C = D$ and $f = 1_C$, we say that $\varphi$ is a \fm{map over $C$}.
		We may similarly define maps over $f$ for left comodules.
		For bicomodules, we may define maps over $(f,g)$, or simply maps over $f$ if $g = f$.
		Thus, maps in $\Comod_C$, $_{C}\Comod$ and $_C \Comod_C$ are maps over $C$. 
		% In the bicomodule case, let $f_i \colon C_i \to D_i$ be comonoid maps for $i = 1,2$, and let $M \in {}_{C_1}\Comod_{C_2}$ and $N \in {}_{D_1}\Comod_{D_2}$.
		% A \fm{comodule map over $(f_1,f_2)$} is a map $\varphi \colon M \to N$ such that the following diagram commutes,
		% \[
		% 	\begin{tikzcd}[column sep = large]
		% 		M \ar[r, "\varphi"] \ar[d,] & N \ar[d]
		% 		\\
		% 		C_1 \otimes M \otimes C_2 \ar[r, "f_1 \otimes \varphi \otimes f_2"] & D_1 \otimes N \otimes D_2
		% 	\end{tikzcd}
		% 	\quad \quad \quad
		% 	\left(
		% 	\begin{tikzcd}[sep = large]
		% 		M \ar[r,"\varphi"] \ar[d, , dotted] & N \ar[d, , dotted]
		% 		\\
		% 		C_1^\op \otimes C_2 \ar[r, "f_1^\op \otimes f_2"] & D_1^\op \otimes D_2
		% 	\end{tikzcd}			
		% 	\right)
		% \]
		% where the vertical arrows are the respective coactions on $M$ and $N$.
	\end{definition}

	\begin{lemma}
		Let $f \colon C \to D$ be a comonoid map, $M \in \Comod_C$ and $N \in \Comod_D$.	

		A map $\varphi \colon M \to N$ over $f \colon C \to D$ is equivalently a $D$-comodule map $f_* M \to N$, where $f_*$ is the corestriction along $f$.
	\end{lemma}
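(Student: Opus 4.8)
The plan is to unwind both notions to the same underlying condition on a single morphism of $\V$. First I would recall the corestriction functor $f_*$: for $M \in \Comod_C$ with right coaction $\rho_M \colon M \to M \otimes C$, the object $f_* M$ has the same underlying object $M$, now equipped with the $D$-coaction
\[
\rho_{f_* M} := (M \otimes f) \circ \rho_M \colon M \to M \otimes C \to M \otimes D.
\]
That this is a genuine $D$-comodule is exactly where the hypothesis that $f$ is a comonoid map enters, via $\epsilon_D \circ f = \epsilon_C$ for the counit law and $\Delta_D \circ f = (f \otimes f) \circ \Delta_C$ for coassociativity. I would cite this as the defining property of corestriction rather than recompute it, since it is independent of the bijection to be proved.

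The key observation is that a map over $f$ and a $D$-comodule map $f_* M \to N$ carry the same underlying data, namely a single morphism $\varphi \colon M \to N$ in $\V$; so it suffices to show that their defining equations coincide. A $D$-comodule map $\varphi \colon f_* M \to N$ is by definition a morphism satisfying $(\varphi \otimes D) \circ \rho_{f_* M} = \rho_N \circ \varphi$. Substituting the coaction $\rho_{f_* M} = (M \otimes f) \circ \rho_M$ gives
\[
(\varphi \otimes D) \circ (M \otimes f) \circ \rho_M = \rho_N \circ \varphi,
\]
and the functoriality (interchange law) of $\otimes$ collapses $(\varphi \otimes D) \circ (M \otimes f)$ to $\varphi \otimes f$. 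The equation becomes $(\varphi \otimes f) \circ \rho_M = \rho_N \circ \varphi$, which is precisely the square defining a map over $f$. Since each manipulation is a reversible equality, $\varphi$ is a map over $f$ if and only if it is a $D$-comodule map $f_* M \to N$, and the resulting bijection is the identity on underlying morphisms.

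I do not expect any genuine obstacle here: the content is a definitional unwinding whose only moving part is the interchange law $(\varphi \otimes D) \circ (M \otimes f) = \varphi \otimes f$, together with the standard fact that corestriction lands in $\Comod_D$. The only real care is bookkeeping — ensuring the coaction placed on $f_* M$ is the one induced by $f$, and that no strictness of $\V$ is silently assumed; any associators and unitors that would appear in a non-strict setting cancel and may be suppressed for a symmetric monoidal $\V$. The analogous statements for left comodules and for bicomodules over $(f,g)$ follow by the same argument applied to the remaining coaction(s).
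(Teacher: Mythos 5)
The paper states this lemma without proof, and your argument is exactly the definitional unwinding the author evidently has in mind: the interchange law $(\varphi \otimes D)\circ(M \otimes f) = \varphi \otimes f$ identifies the $D$-comodule-map square for $f_*M \to N$ with the square defining a map over $f$. Your proposal is correct and complete, including the correct observation that the comonoid-map hypothesis on $f$ is what makes $f_*M$ a $D$-comodule in the first place.
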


    \begin{definition}
        Let $B,C,D$ be comonoids, and let $M \in {}_B \Comod_C$ and $N \in {}_C \Comod_D$.
        The \fm{cotensor over $C$} of $M$ and $N$ is the equalizer:
        \[
            \begin{tikzcd}[column sep = large]
                M \cot{C} N \ar[r, rightarrowtail] &
                M \otimes N \ar[r, shift left, "\rho_M \otimes N"] \ar[r, shift right, "M \otimes \lambda_N"'] &
                M \otimes C \otimes N
            \end{tikzcd}
        \]
    \end{definition}        

    \begin{proposition}[\cite{aguiar1997internal}*{Proposition 2.2.1}] \label{prop:cotensor-coact}
        When $\V$ is a regular, $M \cot{C} N$ has a right $D$-coaction induced by the coaction on $N$: 
        \[
            \begin{tikzcd}[sep = large]
                M \cot{C} N \ar[r, rightarrowtail] \ar[d, dashed, "M \boxtimes \rho_N"'] & M \otimes N \ar[r, shift left] \ar[r, shift right] \ar[d, "M \otimes \rho_N"'] & M \otimes C \otimes N \ar[d, "M \otimes C \otimes \rho_N"]
                \\
                M \cot{C} N \otimes D \ar[r, rightarrowtail] & M \otimes N \otimes D \ar[r, shift right] \ar[r, shift left] & M \otimes C \otimes N \otimes D
            \end{tikzcd}
        \]
        Similarly, $M \cot{C} N$ has a left $B$-coaction making $M \cot{C} N$ an object of $_B\Comod_D$.
    \end{proposition}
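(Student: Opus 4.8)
The plan is to construct the right $D$-coaction on $M \cot{C} N$ by the universal property of equalizers, using regularity to guarantee that the bottom row of the displayed diagram is again an equalizer. Write $e \colon M \cot{C} N \rightarrowtail M \otimes N$ for the defining equalizer inclusion, so that $(\rho_M \otimes N) \circ e = (M \otimes \lambda_N) \circ e$. Since $\V$ is regular, tensoring this equalizer on the right with $D$ again yields an equalizer; that is, $e \otimes D \colon (M \cot{C} N) \otimes D \rightarrowtail M \otimes N \otimes D$ is the equalizer of $\rho_M \otimes N \otimes D$ and $M \otimes \lambda_N \otimes D$. In particular $e \otimes D$ is a monomorphism, which will be used repeatedly below.

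First I would show that the composite $(M \otimes \rho_N) \circ e \colon M \cot{C} N \to M \otimes N \otimes D$ equalizes this bottom pair, which then produces the dashed arrow $M \boxtimes \rho_N$ by the universal property. To verify the equalizing condition I would compute both composites into $M \otimes C \otimes N \otimes D$. On the one hand, because $\rho_M$ and $\rho_N$ act on disjoint tensor factors, the interchange law gives $(\rho_M \otimes N \otimes D) \circ (M \otimes \rho_N) = (M \otimes C \otimes \rho_N) \circ (\rho_M \otimes N)$; on the other hand, the bicomodule compatibility of the left $C$- and right $D$-coactions on $N$, namely $(\lambda_N \otimes D) \circ \rho_N = (C \otimes \rho_N) \circ \lambda_N$, gives $(M \otimes \lambda_N \otimes D) \circ (M \otimes \rho_N) = (M \otimes C \otimes \rho_N) \circ (M \otimes \lambda_N)$. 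Precomposing both with $e$ and invoking $(\rho_M \otimes N) \circ e = (M \otimes \lambda_N) \circ e$ shows the two paths agree, so the equalizing condition holds and $M \boxtimes \rho_N$ is defined uniquely by $(e \otimes D) \circ (M \boxtimes \rho_N) = (M \otimes \rho_N) \circ e$.

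Next I would verify that $M \boxtimes \rho_N$ is genuinely a coaction. The counit and coassociativity axioms can each be checked after postcomposing with the relevant monomorphisms $e \otimes D$ (respectively $e \otimes D \otimes D$), which are monic by regularity; the required identities then reduce to the counit and coassociativity of $\rho_N$ together with the defining relation for $M \boxtimes \rho_N$. The left $B$-coaction $\lambda_M \boxtimes N$ is constructed symmetrically from $\lambda_M \colon M \to B \otimes M$, and the compatibility making $M \cot{C} N$ an object of ${}_B \Comod_D$ follows because $\lambda_M$ and $\rho_N$ act on disjoint factors; this too is checked by precomposing and postcomposing with the appropriate monic tensored inclusions.

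The main obstacle is the verification that $(M \otimes \rho_N) \circ e$ equalizes the bottom pair, since everything downstream is a routine diagram chase made legitimate by the monicity of the tensored equalizer inclusions. The crux is that this step rests on two distinct facts: the interchange law letting coactions on disjoint factors commute, and the bicomodule axiom for $N$ relating its left $C$- and right $D$-coactions. Regularity is exactly what guarantees that the equalizer survives tensoring, so that the relevant universal property and monicity are available at each stage.
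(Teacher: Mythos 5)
Your construction is exactly the one encoded in the diagram of the statement (and in \cite{aguiar1997internal}, to which the paper defers without giving its own proof): regularity makes the bottom row an equalizer, the right-hand square commutes for the top parallel arrow by bifunctoriality of $\otimes$ and for the bottom one by the bicomodule axiom on $N$, and the dashed arrow $M \boxtimes \rho_N$ is then induced by the universal property, with coassociativity, counitality, and the ${}_B\Comod_D$ compatibility checked against the monic tensored inclusions. The proposal is correct and takes essentially the same approach.
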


    \begin{lemma}[\cite{aguiar1997internal}*{Lemma 7.1.1}] \label{lem:cot-iso}
    	Let $M \in {}_B \Comod_C, N \in {}_C\Comod_D, M' \in {}_{B'}\Comod_{C'}$ and $N' \in {}_{C'}\Comod_{D'}$.
    	\[
    		\begin{tikzcd}[sep = small]
    			 & M \ar[dr, dotted] \ar[dl, dotted] &  & N \ar[dr, dotted] \ar[dl, dotted] & 
    			\\	
    			B &  & C &  & D
    		\end{tikzcd}
    		\quad \quad \quad
    		\begin{tikzcd}[sep = small]
    			 & M' \ar[dr, dotted] \ar[dl, dotted] &  & N' \ar[dr, dotted] \ar[dl, dotted] & 
    			\\
    			B' &  & C' &  & D'
    		\end{tikzcd}    		
    	\]
    	Then there is a canonical isomorphism in $_{B\otimes B'}\Comod_{D \otimes D'}$
    	\[
    		(M\cot{C} N) \otimes (M' \cot{C'} N') \cong (M \otimes M') \cot{C \otimes C'} (N \otimes N')
    	\]
    	natural in $M, N, M'$ and $N'$.
    \end{lemma}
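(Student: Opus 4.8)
The plan is to exhibit both sides as the same limit of a single diagram, using regularity to handle the tensor factors and the counits of $C$ and $C'$ to reconcile the two different equalizer descriptions. Write $X = M \otimes N$ and $X' = M' \otimes N'$, so that $M \cot{C} N$ is the equalizer of the pair $\rho_M \otimes N,\; M \otimes \lambda_N \colon X \to M \otimes C \otimes N$, and similarly for $M' \cot{C'} N'$. Let $\sigma \colon X \otimes X' \xto{\;\sim\;} M \otimes M' \otimes N \otimes N'$ be the reordering symmetry (the instance of $\braid$ swapping $N$ past $M'$). I will show that, transported along $\sigma$, both the left-hand and right-hand sides coincide with the \emph{joint equalizer} $J \rightarrowtail X \otimes X'$ of the two pairs P1 (the $C$-condition on $X$, tensored with $X'$) and P2 ($X$ tensored with the $C'$-condition on $X'$).

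First I would identify the left-hand side with $J$. Since $\V$ is regular, $\otimes$ preserves equalizers, so $E \otimes X'$ is the equalizer of P1 and $X \otimes E'$ is the equalizer of P2, where $E = M \cot{C} N$ and $E' = M' \cot{C'} N'$. Tensoring the equalizer $E' \rightarrowtail X'$ on the left by $E$ shows $E \otimes E' \rightarrowtail E \otimes X'$ is the equalizer of the restriction of P2; the key point is that $E \otimes (M' \otimes C' \otimes N') \rightarrowtail X \otimes (M' \otimes C' \otimes N')$ is again monic (it is itself an equalizer, by regularity), so any map into $X \otimes X'$ equalizing both P1 and P2 factors first through $E \otimes X'$ and then, after cancelling this mono, through $E \otimes E'$. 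A short chase with the universal property then identifies $E \otimes E'$ with $J$.

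Next I would identify the right-hand side with $J$, which is where the real content lies. Under $\sigma$, the right-hand cotensor is the equalizer of the \emph{single} pair $a = \rho_{M \otimes M'} \otimes (N \otimes N')$ and $b = (M \otimes M') \otimes \lambda_{N \otimes N'}$ into $M \otimes M' \otimes C \otimes C' \otimes N \otimes N'$. I claim a morphism equalizes $(a,b)$ if and only if it equalizes both P1 and P2. One direction is routine: since $a$ applies $\rho_M$ and $\rho_{M'}$ while $b$ applies $\lambda_N$ and $\lambda_{N'}$ on disjoint factors, naturality of $\braid$ lets the separate conditions P1 and P2 combine to give $a = b$ after precomposition. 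The reverse direction is the main obstacle, and I expect to settle it using the counits: composing the equation $a = b$ with $M \otimes M' \otimes C \otimes \epsilon_{C'} \otimes N \otimes N'$ collapses the $C'$ factor produced by $\rho_{M'}$ (respectively by $\lambda_{N'}$) to the identity via the counit axiom, leaving exactly the P1-condition; symmetrically, applying $\epsilon_C$ recovers P2. Thus the single combined equation forces both separate equations, and the two equalizers agree as subobjects, giving the desired canonical isomorphism as the comparison of two limits of one diagram.

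Finally I would address naturality and the bicomodule structure. Naturality in $M, N, M', N'$ is automatic, as every map involved (equalizer inclusions, $\sigma$, and the instances of $\braid$) is natural. To see that the isomorphism lies in $_{B\otimes B'}\Comod_{D\otimes D'}$, I would invoke \Cref{prop:cotensor-coact}: both sides carry coactions induced from the outer coactions on the ambient tensor products, and $\sigma$ together with the relevant braidings intertwines those outer coactions; since the defining inclusions are monic, the induced map on equalizers commutes with the coactions. This last step is a routine diagram chase that I would carry out at the end, after the identification of the underlying objects is in hand.
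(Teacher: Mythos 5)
Your argument is correct, but note that the paper does not actually prove this lemma --- it is stated as a citation of \cite{aguiar1997internal}*{Lemma 7.1.1} and used as a black box, so there is no in-paper proof to compare against. Your route (identify both sides with the joint equalizer of the two pairs P1 and P2 inside $M \otimes N \otimes M' \otimes N'$, up to the reordering symmetry $\sigma$) is the standard one and all the key steps check out: regularity gives that $E \otimes E'$ is the joint equalizer by the two-step factorization through $E \otimes X'$; the forward implication P1 $\wedge$ P2 $\Rightarrow (a=b)$ follows from the interchange law for $\otimes$ and naturality of $\braid$; and the reverse implication --- the genuine content --- is exactly the counit trick you describe, since $(M' \otimes \epsilon_{C'})\rho_{M'} = 1_{M'}$ and $(\epsilon_{C'} \otimes N')\lambda_{N'} = 1_{N'}$ collapse the $C'$ leg of both $a$ and $b$, leaving precisely the $C$-condition P1 (and symmetrically with $\epsilon_C$ for P2). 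The closing remarks on naturality and on the $_{B\otimes B'}\Comod_{D\otimes D'}$ structure via \Cref{prop:cotensor-coact} and monicity of the equalizer inclusions are routine but correctly identified as the remaining obligations. In short: a complete and correct proof of a statement the paper outsources.
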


    It is further shown in \cite{aguiar1997internal}*{\S 2.2} that cotensoring extends to a functor 
    \[
    	-\cot{C}-\colon	_B \Comod_C \times {}_C \Comod_D \to {}_B \Comod_D.
    \]
    In particular, if $\varphi \colon M \to M'$ and $\psi \colon N \to N'$ are maps in $_B\Comod_C$ and $_C \Comod_D$, there is a $_B \Comod_D$-map $\varphi \cot{C} \psi \colon M \cot{C}N \to M'\cot{C}N'$.
    More generally, we have:

    \begin{proposition} \label{prop:cotensor-maps}
    	Let $f\colon B \to B', g \colon C \to C'$ and $h \colon D \to D'$ be comonoid maps, 
    	and let $M \in {}_B \Comod_C, N \in {}_C\Comod_D, M' \in {}_{B'}\Comod_{C'}$ and $N' \in {}_{C'}\Comod_{D'}$, 
    	and suppose we have $\varphi \colon M \to M'$ over $(f,g)$ and $\psi \colon N \to N'$ over $(g,h)$.
    	\[
    		\begin{tikzcd}[row sep = small]
    			 & M \ar[dr, dotted] \ar[dl, dotted] \ar[dd, "\varphi"'] &  & N \ar[dl, dotted] \ar[dr, dotted] \ar[dd, "\psi"] & 
    			\\
    			B \ar[dd, "f"'] &  & C \ar[dd, "g"] &  & D \ar[dd, "h"]
    			\\
    			 & M' \ar[dr, dotted] \ar[dl, dotted] &  & N' \ar[dl, dotted] \ar[dr, dotted] & 
    			\\
    			B' &  & C' &  & D'   			
    		\end{tikzcd}
    	\]
    	Then there is a map $\varphi \cot{g} \psi \colon M \cot{C} N \to M' \cot{C'} N'$ over $(f,h)$.
    \end{proposition}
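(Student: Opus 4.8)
The plan is to construct $\varphi \cot{g} \psi$ through the universal property of the equalizer $\iota' \colon M' \cot{C'} N' \rightarrowtail M' \otimes N'$, and then to check separately that the resulting arrow intertwines the outer $B$- and $D$-coactions. Throughout, write $\iota \colon M \cot{C} N \rightarrowtail M \otimes N$ for the equalizer inclusion defining the source cotensor, so that $(\rho_M \otimes N) \circ \iota = (M \otimes \lambda_N) \circ \iota$.

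First I would show that the composite $(\varphi \otimes \psi) \circ \iota \colon M \cot{C} N \to M' \otimes N'$ equalizes the parallel pair $\rho_{M'} \otimes N'$ and $M' \otimes \lambda_{N'}$. The key point is that $g$ occurs simultaneously in the right-$C'$ coaction condition for $\varphi$ and in the left-$C'$ coaction condition for $\psi$: compatibility of $\varphi$ over $(f,g)$ gives $\rho_{M'} \circ \varphi = (\varphi \otimes g) \circ \rho_M$, while compatibility of $\psi$ over $(g,h)$ gives $\lambda_{N'} \circ \psi = (g \otimes \psi) \circ \lambda_N$. Substituting these, the composite through $\rho_{M'} \otimes N'$ becomes $(\varphi \otimes g \otimes \psi) \circ (\rho_M \otimes N)$, and the composite through $M' \otimes \lambda_{N'}$ becomes $(\varphi \otimes g \otimes \psi) \circ (M \otimes \lambda_N)$. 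Since $\iota$ equalizes $\rho_M \otimes N$ and $M \otimes \lambda_N$, the two agree after precomposition with $\iota$. The universal property of $\iota'$ then produces a unique map $\varphi \cot{g} \psi \colon M \cot{C} N \to M' \cot{C'} N'$ characterized by $\iota' \circ (\varphi \cot{g} \psi) = (\varphi \otimes \psi) \circ \iota$.

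It remains to verify that $\varphi \cot{g} \psi$ is a map over $(f,h)$, i.e.\ that it respects the induced outer coactions of \Cref{prop:cotensor-coact}; I would treat the right $D/D'$-coaction, the left case being symmetric. Recall that the induced coaction $M \boxtimes \rho_N$ is characterized by $(\iota \otimes D) \circ (M \boxtimes \rho_N) = (M \otimes \rho_N) \circ \iota$, and similarly for $M' \boxtimes \rho_{N'}$. Because $\V$ is regular, $\iota' \otimes D'$ is again an equalizer, hence monic, so it suffices to establish the coaction square $\bigl((\varphi \cot{g} \psi) \otimes h\bigr) \circ (M \boxtimes \rho_N) = (M' \boxtimes \rho_{N'}) \circ (\varphi \cot{g} \psi)$ after postcomposing with $\iota' \otimes D'$. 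Using the two characterizing identities and the defining equation for $\varphi \cot{g} \psi$, the left-hand side reduces to $(\varphi \otimes \psi \otimes h) \circ (M \otimes \rho_N) \circ \iota$; on the right-hand side the same expression emerges once the right-coaction condition $\rho_{N'} \circ \psi = (\psi \otimes h) \circ \rho_N$ for $\psi$ over $(g,h)$ supplies the factor $h$. Cancelling the monomorphism $\iota' \otimes D'$ yields the square, and the left-$f$ square is obtained identically from the left-coaction condition for $\varphi$.

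I expect the genuinely delicate step to be the first one: one must keep careful track of which of the two coaction conditions for $\varphi$ and for $\psi$ carries $g$, and confirm that they meet correctly in the middle factor $C'$, so that both composites collapse onto the single arrow $\varphi \otimes g \otimes \psi$. Everything afterward is a diagram chase whose only substantive ingredient is the monicity of the tensored equalizer inclusions, which is exactly what regularity of $\V$ guarantees.
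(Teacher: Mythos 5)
Your proposal is correct and follows essentially the same route as the paper: both construct $\varphi \cot{g} \psi$ via the universal property of the equalizer (using the coaction conditions for $\varphi$ over $(f,g)$ and $\psi$ over $(g,h)$ to collapse both legs onto $\varphi \otimes g \otimes \psi$), and both verify compatibility with the outer coactions by exploiting regularity of $\V$ -- the paper phrases this as uniqueness of the map induced by the diagonal composite into the tensored equalizer, while you phrase it as cancelling the monomorphism $\iota' \otimes D'$, which is the same argument.
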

    \begin{proof}
    	The map $\varphi\cot{g} \psi$  is induced by:
        \[
            \begin{tikzcd}[sep = large]
                M \cot{C} N \ar[r, rightarrowtail] \ar[d, dashed, "\varphi \cot{g} \psi"'] & M \otimes N \ar[r, shift left] \ar[r, shift right] \ar[d, "\varphi \otimes \psi"'] & M \otimes C \otimes N \ar[d, "\varphi \otimes g \otimes \psi"]
                \\
                M' \cot{C'} N' \ar[r, rightarrowtail] & M' \otimes N' \ar[r, shift right] \ar[r, shift left] & M' \otimes C' \otimes N'
            \end{tikzcd}
        \]    	
        This is a comodule map over $h$ if the left-most face of the following diagram commutes,
        \[
        	\begin{tikzcd}[sep = normal]
        		M_{C}N \ar[rr, rightarrowtail] \ar[dd, dashed] \ar[dr, dashed]  & & M N \ar[rr, shift right] \ar[rr, shift left] \ar[dd] \ar[dr]  & & M  C  N \ar[dd] \ar[dr]
        		\\
        		& M'_{C'}N'\ar[rr, rightarrowtail, crossing over]\ar[dd, dashed, crossing over] & & M'  N' \ar[rr, shift right, crossing over] \ar[rr, shift left, crossing over] \ar[dd, crossing over]  & & M'  C'  N' \ar[dd]
        		\\
        		M_{C}N  D \ar[rr, rightarrowtail]\ar[dr, dashed] & & M N  D \ar[rr, shift right] \ar[rr, shift left] \ar[dr]  & & M  C  N  D \ar[dr]
        		\\
        		& M'_{C'}N'  D'\ar[rr, rightarrowtail] & & M'  N'  D' \ar[rr, shift right] \ar[rr, shift left]  & & M'  C'  N'  D'
        	\end{tikzcd}
        \]
        where we have omitted $\otimes$ and $\cot{}$ for brevity.
        But both composites that make up the left-most face are maps uniquely induced by the diagonal map $M \cot{C} N \to M' \otimes N' \otimes D'$, hence are equal.
        Similarly, $\varphi\cot{g} \psi$ is a comodule map over $f$.
    \end{proof}

    \begin{theorem}[\cite{aguiar1997internal}*{Theorem 2.2.1}]
    	There is a bicategory whose objects are comonoids in $\V$, and whose category of arrows from $C$ to $D$ is $_C\Comod_D$.
    \end{theorem}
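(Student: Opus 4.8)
The plan is to exhibit the data of a bicategory and then check the coherence axioms, reducing the latter to coherence in $\V$. The $0$-cells are the comonoids of $\V$; for comonoids $C,D$ the hom-category is $_C\Comod_D$, whose objects are the $1$-cells and whose morphisms (bicomodule maps over $(1_C,1_D)$, i.e.\ maps over $C$ and over $D$) are the $2$-cells, with vertical composition given by ordinary composition in $\V$. Horizontal composition is the cotensor over the middle object,
\[
    -\cot{D}- \colon {}_C\Comod_D \times {}_D\Comod_E \to {}_C\Comod_E,
\]
which is already a functor: the target $(C,E)$-bicomodule structure is supplied by \Cref{prop:cotensor-coact}, and functoriality in both variables by \Cref{prop:cotensor-maps} together with the discussion preceding it. The identity $1$-cell on $C$ is $C$ itself, viewed as an object of $_C\Comod_C$ with both coactions given by the comultiplication $\Delta\colon C \to C\otimes C$.

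What remains is to produce the associator and unitors and to verify the pentagon and triangle. For the associator, given $M \in {}_C\Comod_D$, $N \in {}_D\Comod_E$ and $P \in {}_E\Comod_F$, I would observe that both $(M\cot{D}N)\cot{E}P$ and $M\cot{D}(N\cot{E}P)$ arise as subobjects of $M\otimes N\otimes P$ and in fact realize the same limit, namely the joint equalizer of the parallel pairs $M\otimes N\otimes P \rightrightarrows M\otimes D\otimes N\otimes P$ and $M\otimes N\otimes P \rightrightarrows M\otimes N\otimes E\otimes P$ built from the relevant coactions. The crucial point is regularity (\Cref{def:regular}): since $\otimes$ preserves equalizers, $(M\cot{D}N)\otimes P$ still computes the corresponding equalizer inside $M\otimes N\otimes P$, and symmetrically $M\otimes(N\cot{E}P)$ does on the other side, so both iterated equalizers compute this single joint limit. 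The canonical comparison of the two presentations is the associator, and it is a bicomodule map by the universal property. For the unitors I would use counitality: $C\cot{C}M$ is the equalizer of $\Delta\otimes M$ and $C\otimes\lambda_M$ on $C\otimes M$, and coassociativity of $\lambda_M$ shows that $M\xto{\lambda_M}C\otimes M$ factors through this equalizer, while $\varepsilon\otimes M$ provides the inverse; this yields $C\cot{C}M\cong M$, and symmetrically $M\cot{D}D\cong M$.

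Finally I would verify coherence. Because each hom-object embeds by a monomorphism into a tensor product of underlying objects, and every structural isomorphism is the unique map induced between such limits, a map out of an iterated cotensor is determined by its composite with the inclusion into the ambient tensor product. Hence the pentagon and triangle identities need only be checked after postcomposing with these monomorphisms, whereupon they collapse to the associativity and unit coherences of $\V$ itself, which hold by Mac Lane's coherence theorem. I expect the main obstacle to be the bookkeeping of the associator: exhibiting both iterated cotensors as the \emph{same} joint limit and confirming, by invoking regularity at each stage so that no equalizer is destroyed upon tensoring with $M$ or $P$. Once this identification is in place, functoriality of the cotensor and the naturality of the limit comparison make the remaining naturality and coherence checks formal.
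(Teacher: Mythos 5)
The paper offers no proof of this theorem; it is quoted directly from \cite{aguiar1997internal}*{Theorem 2.2.1}, so there is nothing internal to compare against. Your outline is the standard argument (essentially Aguiar's own) and is correct: you identify the right data, the two genuinely substantive points --- that regularity of $\V$ lets you exhibit both $(M\cot{D}N)\cot{E}P$ and $M\cot{D}(N\cot{E}P)$ as the same joint equalizer inside $M\otimes N\otimes P$, and that counitality and coassociativity of the coaction give the unitors $C\cot{C}M\cong M\cong M\cot{D}D$ --- and the correct reduction of the coherence axioms to checks after postcomposition with the monomorphisms into the ambient tensor products.
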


    \begin{corollary}
    	For $C$ a comonoid in $\V$, $(_C\Comod_C, \cot{C}, C)$ is a monoidal category.
    \end{corollary}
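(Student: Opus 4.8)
The plan is to recognize this as an instance of the general fact that, in any bicategory, the hom-category of endomorphisms of a fixed object is monoidal. Let $\mathcal{B}$ denote the bicategory supplied by the preceding theorem, whose objects are comonoids and whose hom-category from $C$ to $D$ is ${}_C\Comod_D$. Then the endo-hom-category $\mathcal{B}(C,C)$ is exactly ${}_C\Comod_C$, and I claim its monoidal structure is inherited directly from the composition of $\mathcal{B}$.

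First I would assemble the monoidal data from the bicategorical structure. The tensor product is horizontal composition of $1$-cells, which is the cotensor $\cot{C}$; by \Cref{prop:cotensor-coact} this lands in ${}_C\Comod_C$, and it is functorial in both variables by the discussion following \Cref{lem:cot-iso}. The unit object is the identity $1$-cell on $C$, namely $C$ itself viewed as a $(C,C)$-bicomodule via its comultiplication. The associator and the left and right unitors are taken to be the associativity and unit constraints of $\mathcal{B}$ evaluated at the single object $C$; in particular the unitors are the natural isomorphisms $C \cot{C} N \cong N$ and $M \cot{C} C \cong M$.

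It then remains to check the pentagon and triangle axioms for $({}_C\Comod_C, \cot{C}, C)$. These coincide with the pentagon and triangle coherence conditions of $\mathcal{B}$, specialized to the object $C$, and so hold automatically by the theorem. The only point demanding attention---and the main, if modest, obstacle---is to confirm that the identity $1$-cell at $C$ really is $C$ equipped with its comultiplication coactions, so that the bicategorical unitors are the expected cotensor unit isomorphisms. Since this is what pins down the unit object of the monoidal category, I would make it explicit rather than leave it implicit in the construction of $\mathcal{B}$.
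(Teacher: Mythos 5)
Your proof is correct and is exactly the argument the paper intends: the corollary is stated as an immediate consequence of the preceding theorem, via the standard fact that the endo-hom-category of an object in a bicategory is monoidal, with $\cot{C}$ as horizontal composition and $C$ (coacting on itself by comultiplication) as the identity $1$-cell. Your extra care in identifying the unit object explicitly is a reasonable addition but does not change the route.
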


    % \begin{definition}
    % 	Corestriction, cotensoring. \todo{}
    % \end{definition}

    % \begin{lemma}
    % 	TFAE: \todo{}
    % 	\begin{enumerate}
    % 		\item comodule map over $f$
    % 		\item $f^*$
    % 		\item $f_*$
    % 	\end{enumerate}
    % \end{lemma}

    We conclude this section with some useful lemmas.

    \begin{lemma}\label{lem:cot-comonoid}
    	Let $(D,d,e), (M_1, \delta_1, \epsilon_1)$ and $(M_2, \delta_2, \epsilon_2)$ be comonoids in $\V$.
    	If each $M_i$ is in ${}_D \Comod_D$, and $\delta_i$ and $\epsilon_i$ are maps over $d$ and $e$, then $M_1 \cot{D} M_2$ is also a comonoid.
    \end{lemma}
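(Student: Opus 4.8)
The plan is to realize $M_1 \cot{D} M_2$ as a subcomonoid of the tensor-product comonoid $M_1 \otimes M_2$. Recall that in a symmetric monoidal category the tensor product of two comonoids is again a comonoid: writing $Y := M_1 \otimes M_2$, this carries the comultiplication $\Delta_Y = (M_1 \otimes \braid \otimes M_2)(\delta_1 \otimes \delta_2)$ and the counit $E_Y = \epsilon_1 \otimes \epsilon_2$ (post-composed with the unit isomorphism $\1 \otimes \1 \cong \1$). I would first construct a comultiplication and counit on $X := M_1 \cot{D} M_2$ compatible with the inclusion $\iota \colon X \rightarrowtail Y$, and then deduce the comonoid axioms for $X$ from those for $Y$ using that $\iota$ (and its tensor powers) are monomorphisms.

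To build the comultiplication, observe that the hypotheses say precisely that $\delta_i \colon M_i \to M_i \otimes M_i$ is a bicomodule map over $(d,d)$ and $\epsilon_i \colon M_i \to \1$ is a bicomodule map over $(e,e)$. Applying \Cref{prop:cotensor-maps} with $f = g = h = d$ yields a map $\delta_1 \cot{d} \delta_2 \colon M_1 \cot{D} M_2 \to (M_1 \otimes M_1) \cot{D \otimes D} (M_2 \otimes M_2)$, and composing with the inverse of the canonical isomorphism of \Cref{lem:cot-iso} gives
\[
\Delta \colon X \xrightarrow{\; \delta_1 \cot{d} \delta_2 \;} (M_1 \otimes M_1) \cot{D \otimes D} (M_2 \otimes M_2) \xrightarrow{\;\cong\;} (M_1 \cot{D} M_2) \otimes (M_1 \cot{D} M_2) = X \otimes X.
\]
Likewise, applying \Cref{prop:cotensor-maps} with $f = g = h = e$ and using $\1 \cot{\1} \1 \cong \1$ produces a counit $E \colon X \to \1$.

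The crucial point is that $\Delta$ and $E$ \emph{restrict} $\Delta_Y$ and $E_Y$, i.e.\ $(\iota \otimes \iota) \circ \Delta = \Delta_Y \circ \iota$ and $E = E_Y \circ \iota$. This follows from the defining diagram of \Cref{prop:cotensor-maps} — which gives $\iota' \circ (\delta_1 \cot{d} \delta_2) = (\delta_1 \otimes \delta_2) \circ \iota$ for the inclusion $\iota'$ of the middle cotensor — together with the explicit description of the isomorphism in \Cref{lem:cot-iso} as the one induced by the symmetry $\braid$ reshuffling $M_1 \otimes M_2 \otimes M_1 \otimes M_2$ into $M_1 \otimes M_1 \otimes M_2 \otimes M_2$. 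Once these restriction identities are in hand, the comonoid axioms for $X$ are formal. By \Cref{def:regular} the functor $\otimes$ preserves equalizers, so it carries the equalizer $\iota$ to a monomorphism in either variable; hence each $\iota^{\otimes n}$ is a composite of such monomorphisms and is itself monic. Post-composing the two sides of coassociativity with the mono $\iota^{\otimes 3}$, and the two counit laws with the monos $\1 \otimes \iota$ and $\iota \otimes \1$, turns them — via the restriction identities and naturality of the associator and unitors — into the corresponding axioms for $Y$ precomposed with $\iota$, which hold because $Y$ is a comonoid.

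The main obstacle is the bookkeeping in the restriction identity $(\iota \otimes \iota) \circ \Delta = \Delta_Y \circ \iota$: one must check that the isomorphism of \Cref{lem:cot-iso} is exactly the braiding-induced one, so that it intertwines $\iota'$ with $\iota^{\otimes 2}$ up to the symmetry $\braid$. Everything downstream of this is a formal consequence of the $\iota^{\otimes n}$ being monic. It is worth emphasizing that the hypotheses that $\delta_i$ and $\epsilon_i$ be maps over $d$ and $e$ enter exactly once, namely to make \Cref{prop:cotensor-maps} applicable, and it is precisely this compatibility that guarantees $\Delta_Y$ and $E_Y$ land in the cotensor subobjects.
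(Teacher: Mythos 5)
Your construction of the comultiplication and counit is exactly the paper's — apply \Cref{prop:cotensor-maps} to $\delta_1,\delta_2$ (resp.\ $\epsilon_1,\epsilon_2$) and compose with the isomorphism of \Cref{lem:cot-iso} — so the approach is essentially the same. The paper leaves the comonoid axioms as an exercise, and your verification via the subcomonoid inclusion $\iota \colon M_1 \cot{D} M_2 \rightarrowtail M_1 \otimes M_2$ (using regularity of $\V$ to see that $\iota^{\otimes n}$ is monic and that $\Delta$, $E$ restrict the tensor-product comonoid structure) is a correct way to fill in that omitted step.
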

    \begin{proof}
    	By \Cref{prop:cotensor-maps}, since each $\delta_i$ is a map over $d$, we have a map $\delta_1 \cot{d} \delta_2$, which we may compose with the isomorphism from \Cref{lem:cot-iso} to obtain a comultiplication:
    	\[
    		\begin{tikzcd}
    			M_1 \cot{D} M_2 \ar[r, "\delta_1 \cot{d} \delta_2"] & (M_1 \otimes M_1) \cot{D \otimes D} (M_2 \otimes M_2) \ar[r, "\cong"] & (M_1 \cot{D} M_2) \otimes (M_1 \cot{D} M_2)
    		\end{tikzcd}
    	\]
    	Similarly, since each $\epsilon_i$ is a comodule map over $e$, we have a counit
    	\[
    		\begin{tikzcd}
    			M_1 \cot{D} M_2 \ar[r, "\epsilon_1 \cot{e} \epsilon_2"] & \1 \cot{\1} \1 \cong \1.
    		\end{tikzcd}
    	\]
    	The reader may verify that these maps make $M_1 \cot{D} M_2$ a comonoid.
    \end{proof}

    \begin{lemma} \label{lem:coaction-is-comonoid-map}
    	Let $(C, \delta, \epsilon)$ and $(D, d, e)$ be comonoids, and suppose that $C$ is a $D$-comodule with coaction $p \colon C \to C \otimes D$.
    	Then $p$ is a comonoid map if and only if $\delta$ is a map over $d$:
    	\[
    		\begin{tikzcd}[sep = large]
    			C \ar[r, "\delta"] \ar[d, "p"', dotted] & C \otimes C \ar[d, "p \otimes p", dotted]
    			\\
    			D \ar[r, "d"] & D \otimes D
    		\end{tikzcd}
    	\]
    	%Then $\delta$ is a map over $d$ if and only if $p$ is a comonoid map.
    \end{lemma}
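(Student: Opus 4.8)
The plan is to unpack the assertion ``$p$ is a comonoid map'' into its two defining conditions---compatibility with the comultiplications and compatibility with the counits---and to treat each against the standing hypothesis that $C$ is already a $D$-comodule. Here the target $C \otimes D$ carries the tensor-product comonoid structure, whose comultiplication is $(C \otimes \braid_{C,D} \otimes D)(\delta \otimes d)$ and whose counit is $\epsilon \otimes e$. The strategy is to observe that the counit condition is forced by the comodule axioms, so that $p$ is a comonoid map if and only if it respects comultiplication, and then to identify this remaining condition with the displayed square by transporting it across a symmetry isomorphism.

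First I would dispose of the counit. Since $C$ is a $D$-comodule, the comodule counit axiom gives $(C \otimes e) \circ p = 1_C$ under the coherence isomorphism $C \otimes \1 \cong C$. By bifunctoriality of $\otimes$, the counit $\epsilon \otimes e$ of $C \otimes D$ factors as $\epsilon \circ (C \otimes e)$ (identifying $C \otimes \1 \cong C$ and $\1 \otimes \1 \cong \1$), whence $(\epsilon \otimes e) \circ p = \epsilon \circ (C \otimes e) \circ p = \epsilon$. Thus $p$ automatically commutes with the counits, and no extra hypothesis is consumed here.

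It then remains to compare the comultiplication condition for $p$ with the hypothesis that $\delta$ is a map over $d$. Spelling the latter out, with the coaction $p$ on $C$ and the induced $(D \otimes D)$-coaction $(C \otimes \braid_{D,C} \otimes D)(p \otimes p)$ on $C \otimes C$ that the abbreviated label $p \otimes p$ stands for, the map-over-$d$ square reads
\[
    (C \otimes \braid_{D,C} \otimes D)(p \otimes p) \circ \delta = (\delta \otimes d) \circ p,
\]
an equation of maps $C \to C \otimes C \otimes D \otimes D$. On the other hand, $p$ respects comultiplication exactly when
\[
    (C \otimes \braid_{C,D} \otimes D)(\delta \otimes d) \circ p = (p \otimes p) \circ \delta,
\]
an equation of maps $C \to C \otimes D \otimes C \otimes D$. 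Because $\V$ is symmetric, the two middle symmetries are mutually inverse, $\braid_{D,C} \circ \braid_{C,D} = 1$, so I would pass from one identity to the other by applying $(C \otimes \braid_{D,C} \otimes D)$ to both sides of the second equation: the left-hand symmetries cancel to give $(\delta \otimes d) \circ p$, while the right-hand side becomes $(C \otimes \braid_{D,C} \otimes D)(p \otimes p) \circ \delta$, which is precisely the first equation. Applying $(C \otimes \braid_{C,D} \otimes D)$ reverses the step, yielding the equivalence in both directions.

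The only obstacle is bookkeeping: keeping straight which of $\braid_{C,D}$, $\braid_{D,C}$ reorders which pair of factors, and confirming that the $(D \otimes D)$-coaction implicit in the statement's ``$p \otimes p$'' is indeed $(C \otimes \braid_{D,C} \otimes D)(p \otimes p)$. Once both squares are written with all symmetries explicit, the equivalence is a one-line consequence of the coherence identity $\braid \circ \braid = 1$; in particular, no use of regularity of $\V$ is needed, since no cotensor appears.
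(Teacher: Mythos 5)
Your proposal is correct and follows essentially the same route as the paper's proof: the counit condition is observed to hold automatically from the comodule axiom, and the comultiplication condition for $p$ is identified with the map-over-$d$ square by conjugating with the middle symmetry $C \otimes \braid \otimes D$ and cancelling $\braid \circ \braid = 1$. You even isolate the same subtle point the paper flags in a footnote, namely that this cancellation requires $\V$ to be symmetric rather than merely braided.
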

    \begin{proof}
    	Note that $p$ always preserves counits, so $p$ is a comonoid map if and only if it also preserves comultiplication.

    	The diagram in the lemma commutes precisely when the left pentagon in the following diagram commutes:
    	\[
    		\begin{tikzcd}
    			C \ar[rr, "p"] \ar[dd, "\delta"'] & & C \otimes D \ar[d, "d \otimes \delta"]
    			\\
    			& & C \otimes C \otimes D \otimes D \ar[d, "C \otimes \braid \otimes D"]
    			\\
    			C \otimes C \ar[r, "p \otimes p"] & C \otimes D \otimes C \otimes D \ar[ur, "C \otimes \braid \otimes D"] \ar[r, equals] & C \otimes D \otimes C \otimes D
    		\end{tikzcd}
    	\]
    	The outer square then says that $p$ is a comonoid map\footnote{Note that we need $\V$ to be \emph{symmetric}, not just braided, for the bottom-right corner to commute!}.  

    	Conversely, if $p$ is a comonoid map, the left pentagon in the following diagram commutes:
    	\[
    		\begin{tikzcd}
    			C \ar[dd, "p"'] \ar[rr, "\delta"] & & C \otimes C \ar[d, "p \otimes p"]
    			\\
    			& & C \otimes D \otimes C \otimes D \ar[d, "C \otimes \braid \otimes D"]
    			\\
    			C \otimes D \ar[r, "\delta \otimes d"] & C \otimes C \otimes D \otimes D \ar[ur, "C \otimes \braid \otimes D"] \ar[r, equals] & C \otimes C \otimes D \otimes D
    		\end{tikzcd}
    	\]    	
    	The outer square then says that $\delta$ is a map over $d$.
    \end{proof}

    \begin{lemma} \label{lem:coaction-induced-by-comonoid-map}
    	Let $C, D$ be comonoids, and $p \colon C \to C \otimes D$ be a $D$-coaction that is also a comonoid map.
    	Then $p$ is induced by a comonoid map $q \colon C \to D$.
    \end{lemma}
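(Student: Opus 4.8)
The plan is to produce $q$ by collapsing the $C$-leg of $p$ with the counit, and then to recognize $p$ as the corestriction of the regular coaction along $q$. Concretely, I would set
\[
q \;:=\; \bigl(\epsilon \otimes 1_D\bigr)\circ p \;\colon\; C \to C \otimes D \to \1 \otimes D \cong D,
\]
where $\epsilon$ is the counit of $C$. Recall that ``$p$ is induced by $q$'' means that $p$ is the corestriction $q_*$ of the regular $C$-coaction $\delta$ along $q$, namely the coaction $(1_C \otimes q)\circ \delta \colon C \to C \otimes C \to C \otimes D$; so the two things to check are that $q$ is a comonoid map and that $p = (1_C \otimes q)\circ\delta$.

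That $q$ is a comonoid map is immediate. The counit $\epsilon \colon C \to \1$ is a comonoid map into the unit comonoid, so $\epsilon \otimes 1_D \colon C \otimes D \to \1 \otimes D$ is a comonoid map (a tensor of comonoid maps, using that $\V$ is symmetric monoidal); composing with the comonoid isomorphism $\1 \otimes D \cong D$ and with the comonoid map $p$ exhibits $q$ as a composite of comonoid maps.

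For the identity $p = (1_C \otimes q)\circ\delta$ I would unpack the hypothesis that $p$ is a comonoid map. With the usual comultiplication $(1_C \otimes \braid \otimes 1_D)\circ(\delta \otimes d)$ on $C \otimes D$, preservation of comultiplication reads
\[
(p \otimes p)\circ\delta \;=\; (1_C \otimes \braid \otimes 1_D)\circ(\delta \otimes d)\circ p.
\]
Now apply $1_C \otimes e \otimes \epsilon \otimes 1_D$ to both sides of this equation in $C \otimes D \otimes C \otimes D$. On the left, the comodule counit law $(1_C \otimes e)\circ p = 1_C$ collapses the first $C \otimes D$ block to $1_C$, while the second block becomes $q$ by definition, giving exactly $(1_C \otimes q)\circ\delta$. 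On the right, the comonoid counit laws for $C$ and $D$ kill two of the four legs, and after the symmetry $\braid$ the two surviving legs reassemble into $p$. This yields $p = (1_C \otimes q)\circ\delta$, as desired.

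The only real subtlety — and the step I expect to be the main obstacle — is the bookkeeping of the symmetry $\braid$ in the last computation: one must choose which legs to annihilate with the counits (namely $e$ on the first $D$-leg and $\epsilon$ on the second $C$-leg, rather than the reverse) so that the braiding cancels and the right-hand side collapses precisely to $p$ and not to $\braid \circ p$. This is purely a matter of tracking the symmetric monoidal structure and involves no further ideas.
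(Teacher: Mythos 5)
Your proposal is correct and takes essentially the same route as the paper: both define $q := (\epsilon \otimes 1_D)\circ p$ (counit of $C$ on the first leg), note that $q$ is a composite of comonoid maps, and recover $p = (1_C \otimes q)\circ \delta$ by applying $1_C \otimes e \otimes \epsilon \otimes 1_D$ to the comultiplication-preservation identity for $p$, using the coaction counit law on one block and the comonoid counit laws plus naturality of $\braid$ on the other. The paper packages this last computation as a single pasted commutative diagram rather than an equation chase, but the content, including the careful choice of which legs to annihilate, is identical.
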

    \begin{proof}
    	The counit $e\colon C \to \1$ is a comonoid map, so the composite
    	\[
    		\begin{tikzcd}
    			q \colon C \ar[r, "p"] & C \otimes D \ar[r, "e \otimes D"] & D
    		\end{tikzcd}
    	\]
    	is a comonoid map.
    	The left square of the following diagram commutes because $p$ is a comonoid map; the upper-right square commutes because $p$ is a coaction.
    	\[
    		\begin{tikzcd}[column sep = large]
    			C \ar[r, "p"] \ar[dd, "d"'] & C \otimes D \ar[r, equals] \ar[d, "d \otimes d"'] & C \otimes D \ar[d, equals]
    			\\
    			& C \otimes C \otimes D \otimes D \ar[r, "C \otimes e \otimes e \otimes D"] \ar[d, "C \otimes \braid \otimes D"'] & C \otimes D \ar[d, equals]
    			\\
    			C \otimes C \ar[r, "p \otimes p"] \ar[rr, bend right = 10, "C \otimes q"'] & C \otimes D \otimes C \otimes D \ar[r, "C \otimes e \otimes e \otimes D"] & C \otimes D
    		\end{tikzcd}
    	\]
    	The outer diagram then says that $q$ induces $p$.
    \end{proof}    

    \begin{remark}
    	The converse of Lemma \ref{lem:coaction-induced-by-comonoid-map} does not hold: given an arbitrary comonoid map $q \colon C \to D$, the coaction
    	\[
    		\begin{tikzcd}
    			C \ar[r, "\delta"] & C \otimes C \ar[r, "C \otimes q"] & C \otimes D
    		\end{tikzcd}
    	\]
    	need not be a comonoid map, because $\delta \colon C \to C \otimes C$ is not a comonoid map (unless $C$ is cocommutative).
    	Thus the two equivalent conditions in Lemma \ref{lem:coaction-is-comonoid-map} are stronger than the condition in Lemma \ref{lem:coaction-induced-by-comonoid-map}.
    \end{remark}

    \begin{remark}
    	Note that for any comonoid map $q \colon C \to D$ inducing a coaction $p \colon C \to C \otimes D$, the following diagram always commutes:
    	\[
    		\begin{tikzcd}
    			C \ar[rr, "q"] \ar[dr, "p"', dotted] & & D \ar[dl, "d", dotted]
    			\\
    			& D
    		\end{tikzcd}
    	\]
    \end{remark}

    % We conclude with a simple observation regarding comodules of cocommutative comonoids.
    % Recall that $C$ is a cocommutative comonoid if and only if the comultiplication $\delta \colon C \to C \otimes C$ is a comonoid map.

    \begin{lemma} \label{lem:coaction-over-cocomm}
    	Let $C$ be a cocommutative comonoid, and $M \in \Comod_C$ with coaction $\rho \colon M \to M \otimes C$.
    	Then $\rho$ is a  map over $\delta$:
    	\[
    		\begin{tikzcd}[sep = large]
    			M \ar[r, "\rho"] \ar[d, "\rho"', dotted]  & M \otimes C \ar[d, "\rho \otimes \delta" , dotted]
    			\\
    			C \ar[r, "\delta"] & C \otimes C
    		\end{tikzcd}
    	\]
    \end{lemma}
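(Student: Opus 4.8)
The plan is to turn the abbreviated dotted diagram into the honest commuting square demanded by the definition of a map over $\delta$, and then to reduce everything to a single use of coassociativity of $\rho$ together with the cocommutativity of $C$. First I would make the target coaction explicit. Regarding $M$ as a right $C$-comodule via $\rho$ and $C$ as a right $C$-comodule via $\delta$, the tensor product $M \otimes C$ carries the standard $(C \otimes C)$-coaction
\[
	\rho_{M \otimes C} = (M \otimes \braid \otimes C)(\rho \otimes \delta) \colon M \otimes C \longrightarrow (M \otimes C) \otimes (C \otimes C),
\]
where the braiding $\braid$ is needed to gather the two underlying factors on the left and the two coacting factors on the right. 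With this identification, the dotted square in the statement is, by the definition of a map over $\delta$ (with $\varphi = \rho$ and $f = \delta$), exactly the assertion that
\[
	\rho_{M \otimes C} \circ \rho = (\rho \otimes \delta) \circ \rho.
\]
Thus the lemma reduces to showing that $M \otimes \braid \otimes C$ acts trivially on the image of $(\rho \otimes \delta)\,\rho$.

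Next I would rewrite the right-hand side using coassociativity of the coaction. Factoring $\rho \otimes \delta = (M \otimes C \otimes \delta)(\rho \otimes C)$ and then substituting the coaction-coassociativity identity $(\rho \otimes C)\,\rho = (M \otimes \delta)\,\rho$, I obtain
\[
	(\rho \otimes \delta)\,\rho = (M \otimes C \otimes \delta)(\rho \otimes C)\,\rho = (M \otimes C \otimes \delta)(M \otimes \delta)\,\rho = \big(M \otimes (C \otimes \delta)\delta\big)\,\rho.
\]
The point of this step is that the three $C$-factors now all issue from a single iterated comultiplication $(C \otimes \delta)\delta$ of the one coacting factor produced by $\rho$, which is precisely the shape in which cocommutativity can be applied.

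Finally I would invoke cocommutativity. Using coassociativity of $\delta$ to rewrite $(C \otimes \delta)\delta = (\delta \otimes C)\delta$, applying $M \otimes \braid \otimes C$ only swaps the first two outputs of this comultiplication, and since $\braid \circ \delta = \delta$ for the cocommutative $C$ we get
\[
	(M \otimes \braid \otimes C)\big(M \otimes (\delta \otimes C)\delta\big) = \big(M \otimes ((\braid \circ \delta) \otimes C)\delta\big) = \big(M \otimes (\delta \otimes C)\delta\big).
\]
Combining the two displays yields $\rho_{M \otimes C} \circ \rho = (M \otimes \braid \otimes C)(\rho \otimes \delta)\,\rho = (\rho \otimes \delta)\,\rho$, which is the required square. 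The main obstacle is bookkeeping rather than anything conceptual: one must track the positions of the four tensor factors through the braiding and confirm that the two factors interchanged by $M \otimes \braid \otimes C$ are indeed the two arising from one comultiplication of a single element of $C$, so that $\braid \circ \delta = \delta$ genuinely applies. This is most safely verified by drawing the corresponding commuting cube (or, as the authors suggest, in string diagrams) rather than by reading the arrow labels naively.
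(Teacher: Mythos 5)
Your proposal is correct and follows essentially the same route as the paper's proof: the paper's first displayed diagram is precisely your identity $(M \otimes \braid \otimes C)(\rho \otimes \delta)\rho = (\rho \otimes \delta)\rho$, and it is then reduced, via the coassociativity $(\rho \otimes C)\rho = (M \otimes \delta)\rho$ of the coaction followed by coassociativity of $\delta$ and cocommutativity $\braid\,\delta = \delta$, exactly as you do. The only difference is presentational: you argue with composites of morphisms where the paper pastes commutative diagrams.
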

    \begin{proof}
    	We need the following diagram to commute:
    	\[
    		% \begin{tikzcd}
    		% 	M \ar[r, "\rho"] \ar[dd, "\rho"'] & M \otimes C  \ar[d, "\rho \otimes \delta"]
    		% 	\\
    		% 		& M \otimes C \otimes C \otimes C \ar[d, "M \otimes \braid \otimes C"]
    		% 	\\
    		% 	M \otimes C \ar[r, "\rho \otimes \delta"] & M \otimes C \otimes C \otimes C
    		% \end{tikzcd}
    		% \quad \quad \quad 
    		\begin{tikzcd}[column sep = normal]
    			M \ar[r, "\rho"] \ar[d,"\rho"'] & M \otimes C \ar[r, "\rho \otimes C"] & M \otimes C \otimes C \ar[d, "M \otimes C \otimes \delta"]
    			\\
    			M \otimes C \ar[d, "\rho\otimes C"'] & &  M \otimes C \otimes C \otimes C \ar[d, "M \otimes \braid \otimes C"]
    			\\
    			M \otimes C \otimes C \ar[rr, "M \otimes C \otimes \delta"] & & M \otimes C \otimes C \otimes C
    		\end{tikzcd}    		
    	\]
    	Since $(\rho\otimes C) \rho = (M \otimes \delta)\rho$, this is equivalent to the following diagram commuting,
    	\[
    		\begin{tikzcd}[sep = large]
    			M \ar[r, "\rho"] \ar[d,"\rho"'] & M \otimes C \ar[r, "M \otimes \delta"] \ar[d, equals] & M \otimes C \otimes C \ar[d, "M \otimes C \otimes \delta"]
    			\\
    			M \otimes C \ar[d, "M \otimes \delta"'] \ar[r, equals] &  M \otimes C \ar[r, "M \otimes (\delta \otimes C) \delta"] \ar[d, "M \otimes (\delta \otimes C) \delta"] &  M \otimes C \otimes C \otimes C \ar[d, "M \otimes \braid \otimes C"]
    			\\
    			M \otimes C \otimes C \ar[r, "M \otimes C \otimes \delta"] & M \otimes C \otimes C \otimes C \ar[r, equals] & M \otimes C \otimes C \otimes C
    		\end{tikzcd} 
       	\]
       	whose bottom-right square commutes because $C$ is cocommutative.
    \end{proof}

\section{Comonoidal internal categories} \label{sec:comonoidal-internal-cat}

	We take our definition of a category internal to a regular monoidal category $(\V, \otimes, \1)$ from \cite{aguiar1997internal}.
	Importantly, $\V$ is not required to be cartesian i.e.\ the monoidal product $\otimes$ is not necessarily the cartesian product $\times$.

    \begin{definition}[\cite{aguiar1997internal}*{Definition 2.3.1}]
        A $\V$-\fm{internal category} consists of a comonoid $C$ in $\V$ and a monoid $A$ in $_C \Comod_C$.

        In detail, an internal category is a tuple $\A = (C, A, d, e, \sigma, \tau, u, m)$ with
        \begin{enumerate}
            \item a \emph{comonoid of objects} $C \in \Comon(\V)$, with comultiplication $d \colon C \to C \otimes C$ and counit $e \colon C \to \1$;
            \item a \emph{comodule of maps} $A \in {}_{C}\Comod_{C}$, with coactions\footnote{$\sigma$ for `source' and $\tau$ for `target'.} $\sigma \colon A \to C \otimes A$ and $\tau \colon A \to A \otimes C$;
            \item and \emph{identity} and \emph{composition} comodule maps
            \[
            	\begin{aligned}[b]
            	\begin{tikzcd}
            		& C \ar[dl, dotted, "d"'] \ar[dr, dotted, "d"] \ar[dd, "u"] &
            		\\
            		C & & C
            		\\
            		& A \ar[ul, dotted, "\sigma"]  \ar[ur, dotted, "\tau"'] &
            	\end{tikzcd}
            	\end{aligned}
                \quad\quad\quad\quad
                \begin{aligned}[b]
            	\begin{tikzcd}
            		& A \cot{C} A \ar[dl, dotted, "\sigma"'] \ar[dr, dotted, "\tau"] \ar[dd, "m"] &
            		\\
            		C & & C
            		\\
            		& A \ar[ul, dotted, "\sigma"]  \ar[ur, dotted, "\tau"'] &
            	\end{tikzcd}    
            	\end{aligned}    	
            \]
            satisfying associativity and unitality.
        \end{enumerate}
        For brevity, we will sometimes refer to an internal category $\A$ using subtuples such as $(C, A)$.
    \end{definition}

    \begin{remark}
    	The definition of an internal category does not require the comonoid of objects $C$ to be cocommutative.
    	However, it does not seem possible to define internal prestacks or the internal Grothendieck construction without cocommutativity of objects.
    	The internal categories that we subsequently consider will all have cocommutative comonoids of objects.
    	In such a situation, the left coaction $\sigma$ induces a right coaction $\braid \sigma$.
    	Similarly, the right coaction $\tau$ induces a left coaction $\braid\tau$.
    \end{remark}

    \begin{example}
    	Any monoid $A$ in $\V$ gives rise to the `one-object' internal category $(\1, A)$.
    	Any comonoid $C$ in $\V$ gives rise to the `discrete' internal category $(C,C)$.
    	(These are denoted $\hat{A}$ and $\underaccent{\check}{C}$ in \cite{aguiar1997internal}*{Example 2.4.1}.)
    \end{example}

    \begin{definition}[\cite{aguiar1997internal}*{Definition 4.1.1}]
        Let $\A = (C, A)$ and $\B = (D, B)$ be internal categories in $\V$.
        An \fm{internal functor} from $\A$ to $\B$ is a tuple $(f, \varphi)$ where
        $f \colon C \to D$ is a comonoid map and $\varphi \colon A \to B$ is a map % over $f$ 
        such that the following diagrams commute:
        \[
        	\begin{aligned}[b]
    		\begin{tikzcd}[sep = large]
        		C \ar[d, "f"'] & \ar[l, "\sigma"', dotted] A \ar[d, "\varphi"] \ar[r, "\tau", dotted] & C \ar[d, "f"]
        		\\
        		D & \ar[l, "\sigma"', dotted] B \ar[r, "\tau", dotted] & D
        	\end{tikzcd}
        	\end{aligned}
        	\quad \quad \quad
            \begin{aligned}[b]
            \begin{tikzcd}[sep = large]
                C \ar[r, "f"]  \ar[d, "u"']& D \ar[d, "u"]
                \\
                A \ar[r, "\varphi"] & B
            \end{tikzcd}
            \end{aligned}
            \quad\quad \quad
            \begin{aligned}[b]
            \begin{tikzcd}[column sep = large]
                A \cot{C} A \ar[r, "\varphi \cot{f} \varphi"] \ar[d, "m"'] & B \cot{D} B \ar[d, "m"]
                \\
                A \ar[r, "\varphi"] & B
            \end{tikzcd}
            \end{aligned}
        \]
    \end{definition}

    \begin{definition}
        Let $\Cat(\V)$ denote the category of internal categories and functors. 
        % and internal natural transformations.
    \end{definition}        

    \begin{remark}
    	It is also possible to define internal transformations between internal functors, making $\Cat(\V)$ a $2$-category, but we will not need the $2$-category structure in this paper.
    \end{remark}

    Recall that if $\V$ is a symmetric monoidal category, its category of comonoids $\Comon(\V)$ is also symmetric monoidal, with the same braiding and monoidal product.
    A similar result holds for internal categories.

    \begin{proposition}[\cite{aguiar1997internal}*{\S 7.1}]
    	$\Cat(\V)$ is a monoidal category, with product
    	\[
    		(C, A) \otimes (D, B) := (C \otimes D, A \otimes B)
    	\]
    	and unit $\II := (\1,\1)$.
    \end{proposition}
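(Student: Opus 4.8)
The plan is to define all of the structure componentwise and to reduce every verification to the corresponding fact in $\V$ (or in $\Comon(\V)$), with the one genuinely nontrivial ingredient being the monoidality of the cotensor recorded in \Cref{lem:cot-iso}. There are four things to check: (i) that $(C \otimes D, A \otimes B)$ is again an internal category; (ii) that $\otimes$ is functorial on internal functors; (iii) that the associator and unitors of $\V$ lift to internal functors; and (iv) that the pentagon and triangle then hold. Throughout, the point is that $\Comon(\V)$ inherits a symmetric monoidal structure from $\V$, so on the object level there is nothing to do, and the entire content lives on the arrow level.

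For (i), the object comonoid $C \otimes D$ is a comonoid because $\Comon(\V)$ is symmetric monoidal. The arrow object $A \otimes B$ becomes a $(C \otimes D)$-bicomodule by combining the coactions of $A$ and $B$ and reshuffling with the symmetry $\braid$; for instance the right coaction is
\[
A \otimes B \xto{\tau_A \otimes \tau_B} A \otimes C \otimes B \otimes D \xto{A \otimes \braid \otimes D} (A \otimes B) \otimes (C \otimes D),
\]
and symmetrically for the left coaction built from $\sigma$. The identity map is $u_A \otimes u_B \colon C \otimes D \to A \otimes B$. The composition is the crux: applying \Cref{lem:cot-iso} with $M = N = A$ over $C$ and $M' = N' = B$ over $D$ yields a canonical isomorphism
\[
(A \otimes B) \cot{C \otimes D} (A \otimes B) \;\cong\; (A \cot{C} A) \otimes (B \cot{D} B),
\]
after which one sets $m := (m_A \otimes m_B)$ precomposed with this isomorphism. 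The source/target conditions on $u$ and $m$, and the associativity and unitality axioms, then reduce to the same identities for $\A$ and $\B$ precisely because the isomorphism of \Cref{lem:cot-iso} is natural and compatible with the induced coactions.

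For (ii), given internal functors $(f,\varphi)\colon (C,A) \to (C',A')$ and $(g,\psi)\colon (D,B) \to (D',B')$, the pair $(f \otimes g, \varphi \otimes \psi)$ consists of a comonoid map together with a bicomodule map over it. Preservation of identities is the tensor of the two identity squares. Preservation of composition is where \Cref{prop:cotensor-maps} and the \emph{naturality} of \Cref{lem:cot-iso} combine: under the isomorphism above, the map $(\varphi \otimes \psi) \cot{f \otimes g} (\varphi \otimes \psi)$ corresponds to $(\varphi \cot{f} \varphi) \otimes (\psi \cot{g} \psi)$, so the composition square for the product factors as the tensor of the composition squares for $(f,\varphi)$ and $(g,\psi)$. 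Functoriality of $\otimes$ in each variable (preservation of identity functors and of composites) is then immediate, being componentwise.

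For (iii) and (iv), the associator and unitors of $\V$ are comonoid maps on objects and bicomodule maps on arrows, and one checks that they commute with $u$ and $m$ using only the coherence of $\V$ and the naturality of \Cref{lem:cot-iso}; they thus assemble into the required natural isomorphisms of internal categories, and the pentagon and triangle identities hold because they already hold in $\V$. The main obstacle I expect is the bookkeeping of symmetries: verifying that the $(C \otimes D)$-bicomodule structure on $A \otimes B$ built from $\braid$ is compatible with the isomorphism of \Cref{lem:cot-iso}, so that $m$ genuinely lands in the cotensor and is associative, and that this same isomorphism intertwines the two descriptions of composition needed in (ii). This is exactly the ``one step removed from string diagrams'' computation the introduction warns about: in diagrams it is a routine braid rearrangement, but written out as commuting squares it demands care that each occurrence of $\braid$ and each application of \Cref{lem:cot-iso} is used coherently. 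Everything else is a formal consequence of $\V$ being symmetric monoidal together with \Cref{lem:cot-iso} and \Cref{prop:cotensor-maps}.
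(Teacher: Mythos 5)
Your proof is correct and follows exactly the route the paper intends: the paper itself offers no proof (it cites \cite{aguiar1997internal}*{\S 7.1}), and the construction there is precisely yours, with the componentwise structure on objects and arrows and with \Cref{lem:cot-iso} supplying the isomorphism $(A \otimes B) \cot{C \otimes D} (A \otimes B) \cong (A \cot{C} A) \otimes (B \cot{D} B)$ that defines composition. Your identification of that lemma as the one genuinely nontrivial ingredient, and of the symmetry bookkeeping as the only delicate point, matches the role the paper assigns to it by stating it immediately beforehand.
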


    \begin{definition}
    	A \fm{comonoidal internal category} $\B = (D,B)$ is a comonoid in $\Cat(\V)$.
    \end{definition}

    \begin{proposition}
    	Let $\B = (D, B, d, e, \sigma, \tau, u, m)$ be a comonoidal internal category. Then:
    	\begin{enumerate}
    		\item $D$ is cocommutative (i.e.\ $d$ and $e$ are comonoid maps);
    		\item $B$ is a comonoid, and $\sigma$ and $\tau$ are comonoid maps (hence are induced by comonoid maps $s \colon B \to D$ and $t \colon B \to D$);
    		\item $\sigma, \tau$ and $\delta$ (the comultiplication of $B$) are maps over $d$:
    		\[
    			\begin{tikzcd}[column sep = huge, row sep = large]
    				D \ar[d, "d"'] \ar[from = r, dotted, "\sigma"'] & B \ar[d, "\delta"] \ar[r, "\tau", dotted] & D \ar[d,"d"]
    				\\
    				D \otimes D \ar[from = r, dotted, "\sigma \otimes \sigma"'] & B \otimes B \ar[r, dotted, "\tau \otimes \tau"] & D \otimes D
    			\end{tikzcd}
    		\]
    		\[
    			\begin{tikzcd}[sep = large]
    				B \ar[r,"\sigma"] \ar[d, "\sigma"', dotted] & D \otimes B \ar[d, "d \otimes \sigma" , dotted]
    				\\
    				D \ar[r, "d"] & D \otimes D
    			\end{tikzcd}
    			\quad \quad \quad
    			\begin{tikzcd}[sep = large]
    				B \ar[r,"\tau"] \ar[d, "\tau"', dotted] & B \otimes D \ar[d, "\tau \otimes d" , dotted]
    				\\
    				D \ar[r, "d"] & D \otimes D
    			\end{tikzcd}			    			
    		\]
    		% \[
    		% 	\begin{tikzcd}[sep = large]
    		% 		B \ar[r,"\delta"] \ar[d, "\sigma"', dotted] & B \otimes B \ar[d, "(D \otimes \braid \otimes B)(\sigma \otimes \sigma)" description, dotted]
    		% 		\\
    		% 		D \ar[r, "d"] & D \otimes D
    		% 	\end{tikzcd}
    		% 	\quad \quad \quad
    		% 	\begin{tikzcd}[sep = large]
    		% 		B \ar[r,"\delta"] \ar[d, "\tau"', dotted] & B \otimes B \ar[d, "(B \otimes \braid \otimes D)(\tau \otimes \tau)" description, dotted]
    		% 		\\
    		% 		D \ar[r, "d"] & D \otimes D
    		% 	\end{tikzcd}			    			
    		% \]    		
    		%$\sigma,\tau$ and the comultiplication of $B$ are comonoid maps over $d$;
    		\item $B \cot{D} B$ is a comonoid, and $u$ and $m$ are comonoid maps.
    	\end{enumerate}
    \end{proposition}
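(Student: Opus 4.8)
The plan is to unpack ``$\B$ is a comonoid in $\Cat(\V)$'' into a comultiplication internal functor $\Delta \colon \B \to \B \otimes \B$ and a counit internal functor $\varepsilon \colon \B \to \II$ obeying coassociativity and counitality, and then to read each of the four claims off the components $\Delta = (\Delta_D, \Delta_B)$ and $\varepsilon = (\varepsilon_D, \varepsilon_B)$ together with the internal-functor axioms these satisfy. For (1), apply the strong monoidal functor $\Ob \colon \Cat(\V) \to \Comon(\V)$, $(C,A) \mapsto C$, to the comonoid $\B$; this exhibits $D = \Ob(\B)$ as a comonoid in $\Comon(\V)$, with comultiplication $\Delta_D$ and counit $\varepsilon_D$ that are moreover comonoid maps. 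Since $\1$ is terminal in $\Comon(\V)$ we get $\varepsilon_D = e$, and an Eckmann--Hilton argument (the dual of ``a monoid in monoids is commutative'') forces $\Delta_D = d$ and shows $D$ is cocommutative. Thus $d$ and $e$ are comonoid maps, which is (1); from now on I substitute $\Delta_D = d$ and $\varepsilon_D = e$.

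For (2), the morphism components $\delta := \Delta_B$ and $\varepsilon_B$ inherit coassociativity and counitality from the comonoid axioms of $\B$ (read on morphisms), so $(B, \delta, \varepsilon_B)$ is a comonoid. That $\sigma$ is a comonoid map I would extract from the source-compatibility of $\Delta$: this says $\delta$ is a comodule map over $d$, i.e.\ $(D \otimes \braid \otimes B)(\sigma \otimes \sigma)\delta = (d \otimes \delta)\sigma$, the left side using the source coaction of $\B \otimes \B$. Composing both sides with the involution $D \otimes \braid \otimes B$ and recognising $(D \otimes \braid \otimes B)(d \otimes \delta)$ as the comultiplication $\delta_{D \otimes B}$ of $D \otimes B$ rewrites this as $(\sigma \otimes \sigma)\delta = \delta_{D \otimes B}\,\sigma$, i.e.\ $\sigma$ preserves comultiplication; the counit condition $(e \otimes \varepsilon_B)\sigma = \varepsilon_B$ is the source-compatibility of $\varepsilon$. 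The target coactions handle $\tau$ in the same way. As $D$ is cocommutative, $\sigma$ and $\tau$ are then coactions that are comonoid maps, so \Cref{lem:coaction-induced-by-comonoid-map} supplies the inducing comonoid maps $s, t \colon B \to D$.

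Claim (3) comes for free once (1) and (2) are in hand. That $\delta$ is a map over $d$, with respect to both $\sigma$ and $\tau$, is precisely the source- and target-compatibility of the internal functor $\Delta = (d, \delta)$. That $\sigma$ and $\tau$ are themselves maps over $d$ is, after matching the diagrams, exactly the conclusion of \Cref{lem:coaction-over-cocomm} for the cocommutative comonoid $D$ and the $D$-comodule $B$ with coaction $\sigma$ (resp.\ $\tau$).

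For (4), \Cref{lem:cot-comonoid} applied with $M_1 = M_2 = B$ makes $B \cot{D} B$ a comonoid: its hypotheses are that $B$ is a comonoid (2), that $B \in {}_D\Comod_D$, and that $\delta, \varepsilon_B$ are maps over $d, e$ --- the first of these is (3), the second is the source/target-compatibility of $\varepsilon$. Next, $u \colon D \to B$ is a comonoid map because $\Delta$ and $\varepsilon$ respect identities: the identity square of $\Delta$ gives $\delta u = (u \otimes u)d$ and that of $\varepsilon$ gives $\varepsilon_B u = e$. Finally $m \colon B \cot{D} B \to B$ is a comonoid map because $\Delta$ and $\varepsilon$ respect composition. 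I expect this last point to be the main obstacle, since it requires checking that the comultiplication on $B \cot{D} B$ produced by \Cref{lem:cot-comonoid} --- namely $\delta \cot{d} \delta$ followed by the isomorphism of \Cref{lem:cot-iso} --- coincides with the composition $m_{\B \otimes \B}$ of $\B \otimes \B$ occurring in the composition-compatibility square of $\Delta$; granting this, the square reads $\delta\, m = (m \otimes m)\,\delta_{B \cot{D} B}$, and the $\varepsilon$-square gives counit preservation. The only genuinely nontrivial bookkeeping throughout is this compatibility between the monoidal structure of $\Cat(\V)$ and the cotensor-comonoid structure of \Cref{lem:cot-comonoid} and \Cref{lem:cot-iso}.
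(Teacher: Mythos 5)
Your proposal is correct and follows essentially the same route as the paper: the Eckmann--Hilton argument for (1), \Cref{lem:coaction-is-comonoid-map} (whose content you re-derive inline from the source/target-compatibility squares) together with \Cref{lem:coaction-over-cocomm} for (2) and (3), and \Cref{lem:cot-comonoid} plus the unit- and composition-compatibility squares of the internal functors $(d,\delta)$ and $(e,\epsilon)$ for (4). The bookkeeping you flag at the end --- identifying the comultiplication on $B \cot{D} B$ supplied by \Cref{lem:cot-comonoid} with the composition of $\B \otimes \B$ appearing in the functoriality square --- is indeed the one point the paper passes over silently.
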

    \begin{proof}
    	Let $(d', \delta) \colon (D, B) \to (D \otimes D, B \otimes B)$ and $(e', \epsilon) \colon (\1, \1) \to (D,B)$ be internal functors making $\B = (D,B)$ comonoidal.
    	Then $(d,e)$ and $(d',e')$ are both counital comonoidal structures on $D$ such that $d'$ is a comonoid map with respect to $d$.
    	By the Eckmann-Hilton argument, we have $e'=e, d'=d,$ and $D$ is cocommutative.

    	The maps $(\delta, \epsilon)$ make $B$ a comonoid, and $\delta$ is a map over $d$ by definition of an internal functor.
    	By \Cref{lem:coaction-is-comonoid-map}, both $\sigma$ and $\tau$ are comonoid maps, and by \Cref{lem:coaction-over-cocomm}, they are comodule maps over $d$.

    	Since $\delta$ and $\epsilon$ are comodule maps over $d$ and $e$, \Cref{lem:cot-comonoid} shows that $B \cot{D} B$ is a comonoid.
		Finally, since $(d, \delta)$ and $(e, \epsilon)$ are internal functors, $\delta$ and $\epsilon$ are required to make the following diagrams commute: 
		\[
			\begin{aligned}[b]
			\begin{tikzcd}[sep = large]
				D \ar[r, "d"] & D \otimes D
				\\
				B \ar[r, "\delta"] \ar[from = u, "u"'] & B \otimes B \ar[from = u, "u \otimes u"]
			\end{tikzcd}
			\end{aligned}
			\quad\quad\quad\quad
			\begin{aligned}[b]
			\begin{tikzcd}[sep = large]
				D \ar[r, "e"] & \1
				\\
				B \ar[from = u, "u"'] \ar[r, "\epsilon"] & \1 \ar[u, equals]
			\end{tikzcd}
			\end{aligned}
			\quad\quad\quad\quad
			\begin{aligned}[b]
			\begin{tikzcd}
				B \cot{D} B \ar[d, "m"] \ar[r, "\epsilon \cot{e} \epsilon"] & \1 \ar[d, equals]
				\\
				B \ar[r, "\epsilon"] & \1
			\end{tikzcd}
			\end{aligned}		
		\]
		\[
			\begin{aligned}
			\begin{tikzcd}[column sep = large]
				B \cot{D} B \ar[r, "\delta \cot{d} \delta"] \ar[d, "m"']  &  (B \cot{D} B) \otimes (B \cot{D} B) \ar[d, "m \otimes m"]
				\\
				B \ar[r, "\delta"] & B \otimes B
			\end{tikzcd}
			\end{aligned}
		\]		    
		But these are precisely the diagrams that make $u$ and $m$ comonoid maps.
    \end{proof}

    \begin{remark}
    	The previous proposition effectively says that a comonoidal internal category is a `category internal to $\Comon(\V)$'.
    	We write the latter statement in quotes because our definition of internal category requires the ambient monoidal category to be regular, which $\Comon(\V)$ need not be.
    \end{remark}

    \begin{corollary} \label{cor:B-delta-same-source}
    	The following diagram commutes:
    	\[
    		\begin{tikzcd}[sep = large]
    			B \ar[r, "\delta"] \ar[d, "\delta"'] & B \otimes B \ar[d, "(\braid \otimes B)(B \otimes \sigma)"]
    			\\
    			B \otimes B \ar[r, "\sigma \otimes B"] & D \otimes B \otimes B
    		\end{tikzcd}
    	\]
    \end{corollary}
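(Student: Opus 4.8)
The diagram commutes precisely when the two composites $B \to D\otimes B\otimes B$ agree, namely when $(\sigma\otimes B)\delta = (\braid\otimes B)(B\otimes\sigma)\delta$. My plan is to show that \emph{both} of these maps coincide with the single map $(D\otimes\delta)\sigma$, after which the claim follows by transitivity. Note that the left leg becoming $(D\otimes\delta)\sigma$ already says that $\delta$ is a left $D$-comodule map into $B\otimes B$ coacted on via its first factor; this is the conceptual content I will exploit.

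The one non-formal input I will use is that $\sigma$ is a comonoid map, which is established in the previous proposition. Written out, this is the identity $\delta_{D\otimes B}\circ\sigma = (\sigma\otimes\sigma)\circ\delta$, where $\delta_{D\otimes B} = (D\otimes\braid\otimes B)(d\otimes\delta)$ is the comultiplication of the tensor-product comonoid $D\otimes B$. The entire argument then consists of postcomposing this identity with the counit $e\colon D\to\1$ on each of its two $D$-tensor-factors, and simplifying with the only two counit laws available: the comodule counit law $(e\otimes B)\sigma = \mathrm{id}_B$ and the counit law $(e\otimes D)d = \mathrm{id}_D = (D\otimes e)d$ for $D$.

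First I would apply $D\otimes B\otimes e\otimes B$, i.e.\ kill the \emph{second} copy of $D$. On the right-hand side the comodule counit law collapses $(\sigma\otimes\sigma)\delta$ to $(\sigma\otimes B)\delta$, while on the left-hand side the counit law for $d$ collapses $\delta_{D\otimes B}$ to $D\otimes\delta$, yielding $(D\otimes\delta)\sigma$; together these give $(\sigma\otimes B)\delta = (D\otimes\delta)\sigma$, the left leg. Next I would apply $e\otimes B\otimes D\otimes B$ instead, killing the \emph{first} copy of $D$. The same two counit laws now reduce the right-hand side to $(B\otimes\sigma)\delta$ with the surviving $D$ sitting in the middle slot, and the left-hand side to $\sigma$ with $\delta$ applied to its fibre; postcomposing with $\braid\otimes B$ to move the surviving $D$ back to the front turns this into $(\braid\otimes B)(B\otimes\sigma)\delta = (D\otimes\delta)\sigma$, the right leg. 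Combining the two identities proves the corollary.

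The only genuine obstacle is bookkeeping: one must track carefully where the symmetry $\braid$ occurring in $\delta_{D\otimes B}$ sends each tensor factor, so that killing the first versus the second copy of $D$ deposits the surviving coacting factor in the correct slot and matches it against the intended leg of the square. Rendering this as a single commutative diagram amounts to drawing the comonoid-map square for $\sigma$ and adjoining the two counit-contraction triangles on its codomain. It is worth noting that no cocommutativity of $B$ is required here, only that $\sigma$ is a comonoid map.
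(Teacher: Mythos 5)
Your proof is correct and follows essentially the same route as the paper: the paper's entire proof is the one-line remark that the corollary ``follows from $\sigma$ being a map over $d$,'' i.e.\ from the compatibility of $\delta$ with the $\sigma$-coactions recorded in the preceding proposition, which by \Cref{lem:coaction-is-comonoid-map} is exactly your chosen input that $\sigma$ is a comonoid map. Your counit-contraction argument, showing that both legs of the square equal $(D \otimes \delta)\sigma$, is precisely the computation the paper leaves implicit, and it checks out.
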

    \begin{proof}
    	Follows from $\sigma$ being a map over $d$.
    \end{proof}
    Thus, although the comultiplicands of $B$ need not be the same (i.e.\ $B$ is not cocommutative), their sources are.
    The analogous statement for targets also holds.

    \begin{example}
    	If $B$ is a bimonoid, its one-object category $(\1, B)$ is comonoidal.

    	If $D$ is a cocommutative comonoid, its discrete category $(D,D)$ is comonoidal.
    \end{example}

\section{Internal Prestacks} \label{sec:internal-prestacks}
	
	\begin{definition}
		Let $\B = (D, B)$ be a comonoidal internal category.
    	A \fm{right $\B$-comodule category} is a right $\B$-comodule in $\Cat(\V)$.

    	In detail, this is the data of an internal category $\A = (C,A)$ along with:
    	\begin{enumerate}
    		\item a $D$-coaction $p \colon C \to C \otimes D$ that is also a comonoid map (hence is induced by a comonoid map $q\colon C \to D$);
    		\item a $B$-coaction $\pi \colon A \to A \otimes B$ that is also a map over $p$;
    		\item such that $(p, \pi) \colon \A \to \A \otimes \B$ is an internal functor.
    	\end{enumerate}
    	We henceforth refer to these as simply $\B$-comodule categories or $\B$-comodules.
    \end{definition}

    Recall that if $\B = (D,B)$ is comonoidal, then so is the discrete category $\D = (D,D)$.

    % \begin{corollary}
    % 	Let $(p, \pi) \colon \A \to \A \otimes \B$ be a $\B$-comodule.
    % 	Then $p \colon C \to C \otimes D$ is induced by a comonoid map $q \colon C \to D$.     	
    % \end{corollary}

    \begin{lemma} \label{lem:B-D-comod}
    	Let $\B = (D,B)$ be a comonoidal internal category and $\D = (D,D)$ its subcategory of objects.
    	Let $\A = (A,C, \sigma, \tau)$ be a $\D$-comodule category with coaction 
    	\[
    		(p\colon C \to C \otimes D , \pi \colon A \to A \otimes D),
    	\]
    	and let $q \colon C \to D$ be the comonoid map that induces $p$.
    	Then:
    	\begin{enumerate}
    		\item $B \cot{D} C$ is a comonoid, with comultiplication $\Delta:= \delta \cot{d_D} d_C$;
    		\item The $D$-coactions $q_* \sigma$ and $q_* \tau$ on $A$ coincide with $\pi$;
    		\item $\sigma$ and $\tau$ are maps over $d$:
    		\[	
    			\begin{tikzcd}[sep = large]
    				C \otimes A \ar[from = r, "\sigma"'] \ar[d, dotted, "p \otimes \pi"']  & A \ar[d, dotted, "\pi"] \ar[r, "\tau"] & A \otimes C \ar[d, dotted, "\pi \otimes p"]
    				\\
    				D \otimes D \ar[from = r, "d"'] &  D \ar[r, "d"] & D \otimes D
    			\end{tikzcd}
    		\]
    		\item The coactions $\sigma$ and $\tau$ induce $B \cot{D} C$-coactions on $B \cot{D} A$;
    	\end{enumerate}
    \end{lemma}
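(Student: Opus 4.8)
The plan is to take the four claims in order, treating the structural results of \Cref{sec:comonoids-comodules} as black boxes and reducing everything to the map-over conditions packaged into the definition of a $\D$-comodule category. Throughout I use that $C$ and $D$ are cocommutative, so that left and right $D$-coactions correspond under $\braid$ and may be interchanged freely. For (1), I would simply invoke \Cref{lem:cot-comonoid} with $M_1 = (B, \delta, \epsilon)$ and $M_2 = (C, d_C, e_C)$ over the comonoid $D$: the object $B$ is a $D$-bicomodule via $\sigma, \tau$, and its comultiplication and counit are maps over $d$ and $e$ by the analysis of comonoidal internal categories; for $C$, the coaction $p$ is a comonoid map by hypothesis, so \Cref{lem:coaction-is-comonoid-map} shows $d_C$ is a map over $d$, while $e_C$ is automatically a map over $e$. \Cref{lem:cot-comonoid} then produces the comonoid whose comultiplication is exactly $\delta \cot{d} d_C$, the asserted $\Delta$.

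For (2), the key input is that the internal functor $(p, \pi)\colon \A \to \A \otimes \D$ forces $\pi$ to be a map over $p$ for both the source and target coactions. Writing $p = (C \otimes q)\, d_C$ and recalling that the maps object of the discrete category $\D$ carries source and target coaction $d$, the map-over-$p$ square for $\tau$ reads $\rho_{A \otimes D}\,\pi = (\pi \otimes p)\,\tau$. I would collapse the spurious $C$-factor by applying the counit $e_C$ — using $(e_C \otimes D)p = q$ and $(A \otimes e_C)\tau = 1_A$ — to obtain $(A \otimes d)\pi = (\pi \otimes q)\tau$, and then apply $e_D$ to the $D$-factor coming from $\pi$ to conclude $\pi = (A \otimes q)\tau = q_*\tau$. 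The identity $\pi = q_*\sigma$ is the mirror computation starting from the source square.

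For (3), I would deduce the map-over-$d$ property of $\sigma$ and $\tau$ from \Cref{lem:coaction-over-cocomm}: since $C$ is cocommutative, both $\tau$ and $\sigma$ are maps over $d_C$. Corestricting along the comonoid map $q$ — that is, post-composing the relevant $C$-legs with $q$, and using $(q \otimes q)d_C = d\,q$ together with the identifications $\pi = q_*\tau = q_*\sigma$ and the $D$-coaction $p$ on $C$ from (2) — converts ``over $d_C$'' into ``over $d$'', which is precisely the two squares claimed. This is bookkeeping rather than a new idea.

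Finally, for (4) I would build the two coactions by cotensoring the comultiplication of $B$ with the $C$-coactions of $A$. Concretely, \Cref{prop:cotensor-maps} applied to $\delta\colon B \to B \otimes B$ (a map over $d$, from the comonoidal structure) and $\tau\colon A \to A \otimes C$ (a map over $d$, from (3)), over the comonoid map $d\colon D \to D \otimes D$, yields $\delta \cot{d} \tau \colon B \cot{D} A \to (B \otimes B)\cot{D \otimes D}(A \otimes C)$; post-composing with the canonical isomorphism of \Cref{lem:cot-iso} identifies the target with $(B \cot{D} A) \otimes (B \cot{D} C)$, giving the right $(B \cot{D} C)$-coaction, and replacing $\tau$ by $\sigma$ gives the left coaction. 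The main obstacle I anticipate is not the construction of these maps but the verification that they satisfy the coaction axioms (coassociativity, counitality, and left-right bicomodule compatibility): this requires combining coassociativity of $\tau$ and $\sigma$, the comonoid axioms for $\delta$, and — crucially — the naturality of the isomorphism in \Cref{lem:cot-iso}, so that the reshuffling isomorphisms can be slid past the cotensored coaction maps. I expect this to be the most delicate part of the argument, and the place where cocommutativity of $C$ and $D$ is genuinely used.
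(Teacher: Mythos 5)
Your argument for items (1), (2) and (4) tracks the paper's proof essentially verbatim: (1) is \Cref{lem:coaction-is-comonoid-map} feeding \Cref{lem:cot-comonoid}, (2) is the internal-functor square for $(p,\pi)$ collapsed by counits exactly as in the paper's two diagrams, and (4) is $\delta \cot{d}\tau$ and $\delta\cot{d}\sigma$ composed with the isomorphism of \Cref{lem:cot-iso}, again as in the paper. Where you genuinely diverge is item (3). The paper gets it almost for free: the internal-functor condition for $(p,\pi)$ says that $(\tau\otimes d)\pi$ and $(\pi\otimes p)\tau$ agree up to one application of $A\otimes\braid\otimes D$, and since the symmetry is involutive this \emph{is} the claimed square -- no cocommutativity of $C$ is invoked. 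You instead apply \Cref{lem:coaction-over-cocomm} to $\tau$ and $\sigma$ as $C$-coactions and then corestrict along $q$, using $(q\otimes q)d_C = d\,q$ and the identifications from (2). I checked that this computation closes (naturality of $\braid$ lets you slide the $q$'s past the symmetry and then (*) from \Cref{lem:coaction-over-cocomm} finishes it), so your route is valid, but note that it consumes the hypothesis that $C$ is cocommutative, which the lemma as stated does not assume -- it only becomes available via the standing convention of \S3 or once one specializes to prestacks as in \Cref{def:prestack}. The paper's derivation is therefore slightly more economical and more general; yours has the mild advantage of reusing the corestriction machinery uniformly with item (2), at the cost of an extra hypothesis you should make explicit.
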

    \begin{proof}
    	By Lemma \ref{lem:coaction-is-comonoid-map}, since $p$ is a comonoid map, the comultiplication $d_C$ is a comodule map over $d_D$, and the counit $e_C$ is a comodule map over $e_D$.
    	By Lemma \ref{lem:cot-comonoid}, $B \cot{D} C$ is a comonoid.

    	By Lemma \ref{lem:coaction-induced-by-comonoid-map}, $p$ induces a comonoid map $q \colon C \to D$.
    	Corestricting along $q$ makes $A$ a $(D,D)$-bicomodule.
    	% We wish to show that the following diagrams commute:
    	% \[
    	% 	\begin{tikzcd}
    	% 		A \ar[r, "\pi"] \ar[d, "\tau"'] & A \otimes D \ar[d, equals]
    	% 		\\
    	% 		A \otimes C \ar[r, "A \otimes q"] & A \otimes D
    	% 	\end{tikzcd}
    	% 	\quad \quad \quad
    	% 	\begin{tikzcd}
    	% 		A \ar[r, "\pi"] \ar[d, "\sigma"'] & A \otimes D 
    	% 		\\
    	% 		C \otimes A \ar[r, "q \otimes A"] & D \otimes A \ar[u, "\braid"', "\cong"]
    	% 	\end{tikzcd}    		
    	% \]
    	Since $\pi$ is a map over $p$, the left square in the following diagram commutes:
    	\[
    		\begin{tikzcd}[column sep = large]
    			A \ar[r, "\pi"] \ar[dd, "\sigma"'] & A \otimes D \ar[d, "\sigma \otimes d"] \ar[ddr, bend left, "\braid", "\cong"']
    			\\
    			& C \otimes A \otimes D \otimes D \ar[d, "A \otimes \braid \otimes D"]
    			\\
    			C \otimes A \ar[r, "p \otimes \pi"] \ar[rr, bend right= 10, "q \otimes A"'] & C \otimes D \otimes A \otimes D \ar[r, "e \otimes D \otimes A \otimes e"] & D \otimes A 
    		\end{tikzcd}
    	\]    	
    	The outer diagram then says that $\braid \pi$ and $q_* \sigma$ coincide.
		A similar diagram (with an identity instead of $\braid$) shows that $\pi$ and $q_* \tau$ coincide.	   

		Again, the following diagram commutes, so $\tau$ is a map over $d$:
    	\[
    		\begin{tikzcd}[column sep = large]
    			A \ar[r, "\pi"] \ar[dd, "\tau"'] & A \otimes D \ar[d, "\tau \otimes d"] 
    			\\
    			& A \otimes C \otimes D \otimes D \ar[d, "A \otimes \braid \otimes D"] \ar[dr, bend left = 15, equals]
    			\\
    			A \otimes C \ar[r, "\pi \otimes p"] & A \otimes D \otimes C \otimes D \ar[r, "A \otimes \braid \otimes D"] & A \otimes C \otimes D \otimes D 
    		\end{tikzcd}
    	\]
    	Similarly, $\sigma$ is a map over $d$.	
    	We may thus form the composites,
		\[
			\begin{tikzcd}[row sep = small]
				B \cot{D} A \ar[r, "\delta\cot{d} \tau"] & (B \otimes B) \cot{D \otimes D} (A \otimes C) \cong (B \cot{D} A) \otimes (B \cot{D} C)
				\\
				B \cot{D} A \ar[r, "\delta \cot{d} \sigma"] & (B \otimes B) \cot{D \otimes D} (C \otimes A) \cong (B \cot{D} C) \otimes (B \cot{D } A)
			\end{tikzcd}
		\]
		which are seen to be coactions.
    \end{proof}

    \begin{definition} \label{def:prestack}
    	Let $\B = (D,B)$ be a comonoidal internal category and $\D = (D,D)$ its subcategory of objects.
    	A \fm{prestack over $\B$} (or a \fm{$\B$-module category}) consists of:
    	\begin{enumerate} \setcounter{enumi}{-1}
    		\item An internal category $\A = (C,A)$ with $C$ cocommutative;
    		\item A coaction $(p, \pi) \colon \A \to \A \otimes \D$;
    		\item A comonoid map $f \colon B \cot{D} C \to C$ satisfying:
    		\[
    			\begin{aligned}[b]
    				\begin{tikzcd}
    					B \cot{D} C \ar[r, "f"] \ar[d,"\sigma"',dotted] & C \ar[d, "\braid p", dotted]
    					\\
    					D \ar[r, equals] & \underset{\phantom{C}}{D}
    				\end{tikzcd}
    			\end{aligned}
    			\quad \quad \quad
    			\begin{aligned}[b]
	    			\begin{tikzcd}
	    				D \cot{D} C \ar[d, "\cong"'] \ar[r, "u \cot{D} C"] & B \cot{D} C \ar[d, "f"]
	    				\\
	    				C \ar[r, equals] & \underset{\phantom{C}}{C}
	    			\end{tikzcd}
    			\end{aligned}
    			\quad \quad \quad
    			\begin{aligned}[b]
    				\begin{tikzcd}
    					B \cot{D} B \cot{D} C \ar[r, "B \cot{D} f"] \ar[d, "m \cot{D} C"'] & B \cot{D} C \ar[d, "f"]
    					\\
    					B \cot{D} C \ar[r, "f"] & C
    				\end{tikzcd}
    			\end{aligned}
    		\]
    		\item \label{def:prestack-functorial-in-B} A map $\varphi \colon B \cot{D} A \to A$ satisfying:
    		\[
    			\begin{tikzcd}[column sep = huge]
    				B \cot{D} C \ar[d, "f"'] \ar[from = r, dotted, "\delta \cot{d} \sigma"'] & B \cot{D} A \ar[d, "\varphi"] \ar[r, "\delta \cot{d} \tau", dotted] & B \cot{D} C \ar[d, "f"]
    				\\
    				C \ar[from = r, dotted, "\sigma"'] & A  \ar[r, "\tau", dotted]& C
    			\end{tikzcd}
			\]    		
    		\[
				\begin{tikzcd}
					D \cot{D} A \ar[d, "\cong"'] \ar[r, "u \cot{D} A"] & B \cot{D} A \ar[d, "\varphi"]
					\\
					A \ar[r, equals] & \underset{\phantom{A}}{A}
				\end{tikzcd}
				\quad \quad \quad
				\begin{tikzcd}
					B \cot{D} B \cot{D} A \ar[r, "B \cot{D} \varphi"] \ar[d, "m\cot{D} A"'] & B \cot{D} A \ar[d, "\varphi"]
					\\
					B \cot{D} A \ar[r, "\varphi"] & A
				\end{tikzcd}
			\]
			\item \label{def:prestack-functorial-in-A} $f$ and $\varphi$ further satisfy:
			\[
				\begin{tikzcd}
					B \cot{D} C \ar[d, "f"'] \ar[r, "B\cot{D} e"] & B \cot{D} A \ar[d, "\varphi"]
					\\
					C \ar[r, "e"] & \underset{\phantom{A}}{A}
				\end{tikzcd}
				\quad \quad
				\begin{tikzcd}
					B \cot{D} (A \cot{C} A) \ar[r, "B \cot{D} m"] \ar[d, "\varphi_2"'] & B \cot{D} A \ar[d, "\varphi"]
					\\
					A \cot{C} A \ar[r] & A
				\end{tikzcd}
			\]					
    	\end{enumerate}
    \end{definition}

    The map $\varphi_2$ is given by the following lemma:
	\begin{lemma} 
		There is an action $\varphi_2 \colon B \cot{D} (A \cot{C} A) \to A \cot{C} A$.
	\end{lemma}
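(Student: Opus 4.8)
The plan is to define $\varphi_2$ as a composite of a diagonal comparison map followed by the cotensor of $\varphi$ with itself, mirroring the classical formula $\varphi_2\bigl(b,(a_1,a_2)\bigr) = \bigl(\varphi(b,a_1),\varphi(b,a_2)\bigr)$, in which the single base morphism $b$ is duplicated and made to act on each leg of a composable pair. The duplication is effected by the comultiplication $\delta \colon B \to B \otimes B$ of the comonoid $B$.

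First I would assemble the target object. By \Cref{lem:B-D-comod}, $B \cot{D} C$ is a comonoid and the coactions $\sigma,\tau$ induce $(B \cot{D} C)$-coactions on $B \cot{D} A$ (namely $\delta \cot{d} \sigma$ on the left and $\delta \cot{d} \tau$ on the right), so the iterated cotensor $(B \cot{D} A) \cot{B \cot{D} C} (B \cot{D} A)$ is defined. The action $\varphi \colon B \cot{D} A \to A$ is, by the first diagram of item~\ref{def:prestack-functorial-in-B} of \Cref{def:prestack}, a comodule map over the comonoid map $f \colon B \cot{D} C \to C$ for both the left coaction ($\delta \cot{d} \sigma$ versus $\sigma$) and the right coaction ($\delta \cot{d} \tau$ versus $\tau$). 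Hence \Cref{prop:cotensor-maps}, applied with middle comonoid map $f$ and with $\varphi$ in both slots, furnishes a map
\[
    \varphi \cot{f} \varphi \colon (B \cot{D} A) \cot{B \cot{D} C} (B \cot{D} A) \longrightarrow A \cot{C} A .
\]

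Next I would produce the diagonal comparison $\theta \colon B \cot{D} (A \cot{C} A) \to (B \cot{D} A) \cot{B \cot{D} C} (B \cot{D} A)$ induced by $\delta$. Concretely, one composes the defining inclusion $B \cot{D}(A \cot{C} A) \rightarrowtail B \otimes A \otimes A$ with $\delta \otimes A \otimes A$ and a braiding to land in $B \otimes A \otimes B \otimes A$, and then checks that this composite factors through $(B \cot{D} A) \cot{B \cot{D} C}(B \cot{D} A)$ by the universal property of the latter equalizer. I then set $\varphi_2 := (\varphi \cot{f} \varphi) \circ \theta$.

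The main obstacle is the well-definedness of $\theta$: one must verify that the two duplicated copies of $B$ are correctly paired with the two legs, i.e.\ that the composite above equalizes the pair of maps defining the outer cotensor over $B \cot{D} C$. This is precisely where the cocommutativity of $C$ and $D$ and the fact that $\delta,\sigma,\tau$ are maps over $d$ (from the structure of the comonoidal category $\B$ and from \Cref{lem:B-D-comod}) are needed; the comparison is most cleanly recognized as an instance of the interchange isomorphism \Cref{lem:cot-iso} combined with the comultiplication $\Delta = \delta \cot{d} d$ on $B \cot{D} C$, and in fact $\theta$ is an isomorphism. Finally, the unit and associativity axioms making $\varphi_2$ an action reduce, through $\theta$ and the naturality of the cotensor isomorphisms, to the unit and associativity axioms for $\varphi$ recorded in item~\ref{def:prestack-functorial-in-B} of \Cref{def:prestack}, together with coassociativity and counitality of $\delta$.
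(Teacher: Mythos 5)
Your proposal is correct in outline and produces the same map as the paper, but it decomposes it differently. The paper never forms the iterated cotensor $(B \cot{D} A) \cot{B \cot{D} C} (B \cot{D} A)$: instead it corestricts the middle $C$ to $D$ to obtain $\iota \colon A \cot{C} A \to A \cot{D} A \rightarrowtail A \otimes A$, checks that $\iota$ is a comodule map over $d \colon D \to D \otimes D$ (using that the left and right $D$-coactions on $A$ coincide, from \Cref{lem:B-D-comod}), applies \Cref{prop:cotensor-maps} and \Cref{lem:cot-iso} to $\delta \cot{d} \iota$ to land in $(B \cot{D} A) \otimes (B \cot{D} A)$, follows with $\varphi \otimes \varphi$, and only at the very end checks that the resulting map $B \cot{D} (A \cot{C} A) \to A \otimes A$ factors through $A \cot{C} A$. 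You instead front-load the factorization: your $\theta$ must be shown to land in the cotensor over $B \cot{D} C$ (and in each copy of $B \cot{D} A$), after which \Cref{prop:cotensor-maps}, applied with $\varphi$ over $f$ in both slots, makes the landing in $A \cot{C} A$ automatic. The two deferred diagram chases are of comparable difficulty and rest on the same inputs (cocommutativity, $\sigma, \tau, \delta$ being maps over $d$, \Cref{lem:B-D-comod}), so each route buys a cleaner universal-property argument at one end at the price of a harder equalizer check at the other. One caution: \Cref{lem:cot-iso} only relates a tensor of cotensors to a cotensor over a \emph{tensor product} of comonoids, so it does not by itself yield your comparison into the cotensor over the single comonoid $B \cot{D} C$; the concrete equalizer argument you sketch for $\theta$ is genuinely needed, and your stronger claim that $\theta$ is an isomorphism, while plausible, is not required for the lemma.
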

	\begin{proof}
		We first observe that we have a map $A \cot{C} A \to A \cot{D} A$ induced by:
		\[
			\begin{tikzcd}
				A \cot{C} A \ar[r, rightarrowtail] \ar[d, dashed] & A \otimes A \ar[r, shift left] \ar[r, shift right] \ar[d, equals] & A \otimes C \otimes A \ar[d, "A \otimes q \otimes A"]
				\\
				A \cot{D} A \ar[r, rightarrowtail] & A \otimes A \ar[r, shift left] \ar[r, shift right] & A \otimes D \otimes A
			\end{tikzcd}
		\]
		Next, since the left and right $D$-coactions on $A$ coincide, the following diagram commutes,
		\[
			\begin{tikzcd}[column sep = large]
				A \cot{D} A \ar[r, rightarrowtail] \ar[d] & A \otimes A \ar[d, "q_* \sigma"'] \ar[r, "q_* \sigma \otimes q_* \tau"] & D \otimes A \otimes D \otimes A \ar[d, "D \otimes \braid \otimes A"]
				\\
				D \otimes (A \cot{D} A) \ar[r, rightarrowtail] & D \otimes A \otimes A  \ar[r, "d \otimes A \otimes A"]& D \otimes D \otimes A \otimes A
		  	\end{tikzcd}
		\]	
		so the map $\begin{tikzcd} \iota\colon A \cot{C} A \ar[r] & A \cot{D} A \ar[r, rightarrowtail] & A \otimes A \end{tikzcd}$ is a comodule map over $d \colon D \to D \otimes D$.
		% \[
		% 	\begin{tikzcd}
		% 		A \cot{D} A \ar[r, rightarrowtail] \ar[d, dotted] & A \otimes A \ar[d, dotted]
		% 		\\
		% 		D \ar[r, "\Delta_D"] & D \otimes D
		% 	\end{tikzcd}
		% \]
		The comultiplication $\delta \colon B \to B \otimes B$ is also a comodule map over $d$, which we may combine with the above map to obtain a map $B \cot{D} (A \cot{C} A) \to A \otimes A$:
		\[
			\begin{tikzcd}
				B \cot{D} (A \cot{C} A) \ar[r, "\delta \cot{D} \iota"] \ar[dd, "?"', dashed] & (B \otimes B) \cot{D \otimes D} (A \otimes A) \ar[d, "\cong"]
				\\
				& (B \cot{D} A) \otimes (B \cot{D} A) \ar[d, "\varphi \otimes \varphi"]
				\\
				A \cot{C} A \ar[r, rightarrowtail] & A \otimes A
			\end{tikzcd}
		\]
		Finally, a routine diagram chase, repeatedly invoking the naturality of $\otimes$, allows us to verify that this map does indeed factor through $A \cot{C} A$, giving the desired map.
	\end{proof}

	\begin{remark}
		The prestacks we have defined should techincally be called \emph{split} prestacks.
		However, as these are the only prestacks we consider in this paper, we omit the word `split'.
	\end{remark}

\section{Smash products} \label{sec:smash-products}
	Let $\A = (C,A)$ be a prestack over $\B = (D, B)$, with actions $f$ and $\varphi$ as above.
	
	We make $B \cot{D} C$ an object of $_C \Comod_C$, with left coaction induced by 
	the comonoid map $f \colon B \cot{D} C \to C$, and right coaction induced by the comonoid map $t \colon B \to D$,
	\[
		\begin{tikzcd}[row sep = small, column sep = normal]
			f_* \Delta \colon B \cot{D} C \ar[r] &[-15pt] (B \cot{D} C) \otimes (B \cot{D} C) \ar[r, "f \otimes (B \cot{D} C)"] &[30pt] C \otimes (B \cot{D} C) &[-20pt]
			\\
			t_* \Delta \colon B \cot{D} C \ar[r] & (B \cot{D} C) \otimes (B \cot{D} C) \ar[r, "(B \cot{D} C) \otimes (t \cot{D} C)"] & (B \cot{D} C) \otimes (D \cot{D} C) \ar[r, "\cong", phantom] & (B \cot{D} C) \otimes C
		\end{tikzcd}
	\]
	where $\Delta = \delta \cot{d_D} d_C$ is the comultiplication of $B \cot{D} C$.
	We also have a right $B$-coaction induced by the comonoid map $q \colon C \to D$:
	\[
		\begin{tikzcd}[row sep = small, column sep = normal]
			q_* \Delta \colon B \cot{D} C \ar[r] &[-15pt] (B \cot{D} C) \otimes (B \cot{D} C) \ar[r, "(B \cot{D} C) \otimes (B \cot{D} q)"] &[30pt] (B \cot{D} C) \otimes (B \cot{D} D) \ar[r, "\cong", phantom] &[-20pt] (B \cot{D} C) \otimes B
		\end{tikzcd}	
	\]

	\begin{lemma} \label{lem:maps-over-p}
		Let $\A$ be an internal prestack over $\B$.
		Then:
		\begin{enumerate}
			\item The coaction $\pi$ is a bicomodule map over $p$:
				\[
					\begin{tikzcd}[sep = large]
						C \ar[d, "p"'] & A \ar[d, "\pi"] \ar[l, dotted, "\sigma"'] \ar[r, dotted, "\tau"] & C \ar[d, "p"]
						\\
						C \otimes D & \ar[l, dotted, "\sigma \otimes d"'] A \otimes D \ar[r, dotted, "\tau \otimes d"] & C \otimes D
					\end{tikzcd}
				\]
			\item The coaction $q_* \Delta$ is a bicomodule map over $p$:
				\[
					\begin{tikzcd}[sep = large]
						C \ar[d, "p"'] & B \cot{D} C \ar[d, "q_*\Delta"] \ar[l, dotted, "f_*\Delta"'] \ar[r, dotted, "t_* \Delta"] & C \ar[d, "p"]
						\\
						C \otimes D & \ar[l, dotted, "f_* \Delta \otimes \sigma"'] (B \cot{D} C) \otimes B \ar[r, dotted, "t_*\Delta \otimes \tau"] & C \otimes D
					\end{tikzcd}
				\]						
			\item The coaction $f_* \Delta$ is a comodule map over $p$:
				\[
					\begin{tikzcd}[sep = large]
						C \ar[d, "p"'] & B \cot{D} C \ar[d, "f_*\Delta"] \ar[l, dotted, "f_*\Delta"'] 
						\\
						C \otimes D & \ar[l, dotted, "d \otimes \sigma"'] C \otimes (B \cot{D} C) 
					\end{tikzcd}
				\]	
		\end{enumerate}
	\end{lemma}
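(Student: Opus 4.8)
The plan is to verify each of the three ``map over $p$'' conditions as a diagram chase, drawing on the relations already established in \Cref{lem:B-D-comod} together with the fact (from \Cref{lem:coaction-induced-by-comonoid-map}) that $p = (C \otimes q) d_C$ is the coaction induced by the comonoid map $q \colon C \to D$, the description of $f_* \Delta$, $t_* \Delta$ and $q_* \Delta$ as corestrictions of the coassociative comultiplication $\Delta = \delta \cot{d_D} d_C$ of the comonoid $B \cot{D} C$, and the prestack axioms of \Cref{def:prestack} (especially the source-preservation square for $f$). In each case the essential content is coassociativity of a coaction, re-expressed through the symmetry $\braid$ and the cotensor coherence isomorphisms of \Cref{lem:cot-iso}.

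For part (1), I would observe that the statement is nothing more than \Cref{lem:B-D-comod}(3) read ``in the other order''. Indeed, the target square there asserts $(A \otimes \braid \otimes D)(\pi \otimes p)\,\tau = (\tau \otimes d_D)\,\pi$, and postcomposing both sides with the symmetry $A \otimes \braid \otimes D$ cancels the braiding on the left and produces exactly $(\pi \otimes p)\,\tau = (A \otimes \braid \otimes D)(\tau \otimes d_D)\,\pi$, which is the target square of part (1). The source square follows identically from the $\sigma$-half of \Cref{lem:B-D-comod}(3). Thus part (1) needs no new computation, only the interchange of the two tensor factors via $\braid$.

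For parts (2) and (3), the three coactions $f_* \Delta$, $t_* \Delta$ and $q_* \Delta$ are all obtained by applying $\Delta$ and then projecting one output factor along a comonoid map ($f$ to $C$, $t \cot{D} C$ to $C$, and $B \cot{D} q$ to $B$, up to the unit isomorphisms $D \cot{D} C \cong C$ and $B \cot{D} D \cong B$). Each required square therefore reduces, after expanding the coactions, to coassociativity of $\Delta$, namely $(\Delta \otimes E)\Delta = (E \otimes \Delta)\Delta$ with $E := B \cot{D} C$, combined with the compatibilities of the projection maps: that $q$ induces $p$, that $f$ is a comonoid map preserving the source (the first square of \Cref{def:prestack}, which relates the left $D$-coaction on $B \cot{D} C$ to $p$ via $f$), and cocommutativity of $D$. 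I would carry out part (3) first (a single left-comodule square built from $f_* \Delta$, $d_C$ and $\sigma$), then assemble the two squares of part (2) from the same coassociativity identity, bookkeeping the symmetry and coherence isomorphisms as they arise.

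The main obstacle is part (2): it is a genuine bicomodule statement that simultaneously involves all three corestrictions, so one must track how the two outputs of $\Delta$ are projected (by $t \cot{D} C$ on one side and $B \cot{D} q$ on the other) and then re-associated through \Cref{lem:cot-iso}. The delicate point is the interaction between the right $B$-coaction $q_* \Delta$ and the right $C$-coaction $t_* \Delta$ under $p$, which forces one to use the precise relation among $q$, $t$ and $f$ supplied by the prestack axioms rather than coassociativity alone; keeping the numerous $\braid$'s and cotensor unit isomorphisms coherent is where the bulk of the care is needed, even though no single step is conceptually hard.
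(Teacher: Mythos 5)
Your proposal is correct and, on the substantive parts (2) and (3), follows essentially the paper's own route: the paper's proof is exactly the computation you outline, expanding $f_*\Delta$, $t_*\Delta$ and $q_*\Delta$ as corestrictions of $\Delta = \delta \cot{d_D} d_C$ and reducing the left square of (2) to coassociativity of $\Delta$, the source-preservation axiom for $f$ from \Cref{def:prestack}, and the ``same source'' property of the comultiplication (\Cref{cor:B-delta-same-source}, transported to $B \cot{D} C$); the remaining squares are then dismissed as ``similar,'' just as you do. The one ingredient your plan leaves implicit is that last item --- the step that lets $\sigma$ slide from the first comultiplicand of $\Delta$ to the second, past a braiding --- and it is worth naming explicitly, since it is the only place where something beyond coassociativity and the $f$-axiom is used. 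Where you genuinely diverge is part (1): you deduce the square directly from item (3) of \Cref{lem:B-D-comod} by cancelling the invertible symmetry, whereas the paper re-derives it by applying \Cref{lem:coaction-over-cocomm} to $\sigma$ and $\tau$ over the cocommutative $C$ and then identifying the corestrictions $q_*\sigma$ and $q_*\tau$ with $\braid\pi$ and $\pi$ via \Cref{lem:B-D-comod}(2). Your shortcut is valid and non-circular (\Cref{lem:B-D-comod} is proved earlier and independently), and it is the more economical argument; what the paper's longer derivation buys is an explicit check that the induced comodule structures on the tensor factors really do match up under $\braid$, which is precisely the convention-tracking your one-line transposition takes on faith.
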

	\begin{proof}
		By \Cref{lem:coaction-over-cocomm}, the top squares of the following diagrams commute:
		\[
			\begin{tikzcd}[sep = large]
				C \ar[d, "d"'] & \ar[l, dotted, "\sigma"'] A \ar[d, "\sigma"] 
				\\
				C \otimes C \ar[d,"q \otimes C"'] & \ar[l, dotted, "d \otimes \sigma"'] C \otimes A \ar[d, "q \otimes A"] 
				\\
				D \otimes C \ar[d, "\braid"'] & \ar[l, dotted, "d \otimes \sigma"']  D \otimes A \ar[d, "\braid"]
				\\
				C \otimes D & \ar[l, dotted, "\sigma \otimes d"'] A \otimes D
			\end{tikzcd}
			\quad \quad \quad \quad	
			\begin{tikzcd}[sep = large]
				A \ar[d, "\tau"']  \ar[r, dotted, "\tau"] & C \ar[d, "d"]
				\\
				A \otimes C \ar[r, dotted, "\tau \otimes d"] \ar[d, "A \otimes q"'] & C \otimes C \ar[d, "C \otimes q"]
				\\
				A \otimes D \ar[r, dotted, "\tau \otimes d"] & C \otimes D				
			\end{tikzcd}	
		\]
		The remaining squares obviously commute.
		For the left square, since $C$ is cocommutative, the left vertical composite is $p$.
		The right vertical composite is $\pi$ because $q_* \sigma = \braid \pi$ by \Cref{lem:B-D-comod}.
		This proves the first item.

		For the second item, the left square commutes because
		\begin{align*}
			(p \otimes q_* \Delta) \circ (f_* \Delta) 
				&= (C \otimes D \otimes q_* \Delta) \circ (p \otimes B \cot{D} C) \circ (f \otimes B \cot{D} C) \circ \Delta 
			\\ {}^{f \text{ is a map over } D}
				&= (C \otimes D \otimes q_* \Delta) \circ (\braid \otimes B \cot{D} C) \circ (D \otimes f \otimes B \cot{D} C) \circ (\sigma \otimes B \cot{D} C) \circ \Delta
			\\ {}
				&= \bigg( \big(\braid \circ (D \otimes f) \circ \sigma \big) \otimes q_*\Delta \bigg) \circ \Delta
			\\ {}
				&= \bigg( \big(\braid \circ (D \otimes f) \circ \sigma \big) \otimes \left( \big(B \cot{D} C \otimes B \cot{D} q \big)  \circ \Delta \right) \bigg) \circ \Delta
			\\ {}^{\text{associativity of } \Delta}
				&= \bigg( \left(\big( \big(\braid \circ (D \otimes f) \circ \sigma\big) \otimes B \cot{D}C \big)  \circ \Delta \right) \otimes B\cot{D}q \bigg) \circ \Delta
			\\ {}
				&= \bigg( \big(\big((\braid \circ (D \otimes f)) \otimes B \cot{D} C\big) \circ \big(\sigma \otimes B \cot{D} C\big)   \circ \Delta \big) \otimes B\cot{D}q \bigg) \circ \Delta
			\\ {}^\text{\Cref{cor:B-delta-same-source}}
				&= \bigg( \big(\big((\braid \circ (D \otimes f)) \otimes B \cot{D} C\big) \circ \big(\braid \otimes B \cot{D} C\big) \circ \big(B \cot{D} C \otimes \sigma\big)  \circ \Delta \big) \otimes B\cot{D}q \bigg) \circ \Delta
			\\ {}
				&= \bigg( \big( (f \otimes \sigma)  \circ \Delta \big) \otimes B\cot{D}q \bigg) \circ \Delta
			\\ {}
				&= (f_* \Delta \otimes \sigma) \circ q_* \Delta.
		\end{align*}

		The right square of the second item and the square in the third item commute by similar arguments. \todo{Draw diagram}
	\end{proof}		

	We are now in a position to define smash products of internal prestack.

	\begin{theorem} \label{thm:Gr-is-cat}
		Let $(f, \varphi) \colon \A \to \A \otimes \B$ be an internal prestack.
		There is an internal category
		\[
			\A \rtimes \B := \big(C, A \cot{C} (B \cot{D} C) \big),
		\]
		which we call the \fm{smash product} of $\A$ with $\B$.
		Further, $\A \rtimes \B$ has the structure of a $\B$-comodule category.
	\end{theorem}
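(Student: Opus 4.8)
The plan is to equip $\A \rtimes \B = (C, M)$, where $M := A \cot{C}(B \cot{D} C)$, with the data of an internal category---a comonoid of objects, a comodule of maps, and identity and composition maps making the latter a monoid in $({}_C\Comod_C, \cot{C}, C)$---and then to exhibit a $\B$-comodule structure on the result. The comonoid of objects is simply $C$, which is cocommutative by hypothesis. For the comodule of maps, I would apply \Cref{prop:cotensor-coact} to the cotensor $M = A \cot{C}(B \cot{D} C)$: this makes $M$ an object of ${}_C\Comod_C$ whose left coaction (the \emph{source} $\sigma_M$) is induced by $\sigma \colon A \to C \otimes A$ and whose right coaction (the \emph{target} $\tau_M$) is induced by $t_*\Delta \colon B \cot{D} C \to (B \cot{D} C) \otimes C$. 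Intuitively, an arrow of the smash product is a pair consisting of an arrow of $\A$ and an arrow of the action object $B \cot{D} C$, and its source and target are respectively the source of the former and the target of the latter.

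Next I would construct the identity and composition. The identity $u_M \colon C \to M$ is induced, via the universal property of the equalizer defining $M$, by the map $C \xto{d_C} C \otimes C \xto{u_A \otimes u_N} A \otimes (B \cot{D} C)$, where $u_A$ is the identity of $\A$ and $u_N \colon C \cong D \cot{D} C \xto{u \cot{D} C} B \cot{D} C$ is the identity built from the unit $u$ of $\B$; one checks it equalizes using that $u_A$ and $u_N$ are comodule maps. The composition $m_M \colon M \cot{C} M \to M$ is the crux. Writing $M \cot{C} M \cong A \cot{C}(B\cot{D}C) \cot{C} A \cot{C}(B \cot{D} C)$ via cotensor associativity, I would build $m_M$ as the composite that (i) duplicates the first $B \cot{D} C$-factor using the comultiplication $\Delta$ of \Cref{lem:B-D-comod}, (ii) collapses the intervening $C$ via its counit and lets one copy of the resulting $B$-component act on the second $A$-factor through $\varphi \colon B \cot{D} A \to A$, (iii) composes the two $A$-factors via $m_A$, and (iv) combines the remaining $B$-component with the last $B \cot{D} C$-factor through $m$, retaining the $C$-grading. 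This is precisely the internal analogue of the classical smash-product formula $(a \otimes b)(a' \otimes b') = a(b_{(1)} \cdot a') \otimes b_{(2)} b'$. Throughout, the isomorphisms of \Cref{lem:cot-iso} and the induced-map construction of \Cref{prop:cotensor-maps} are used to manipulate factors, and each intermediate map must be checked to factor through the relevant equalizers---a routine but lengthy chase invoking naturality of $\otimes$, exactly as in the construction of $\varphi_2$.

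With $u_M$ and $m_M$ in hand, I would verify the monoid axioms. These are where the prestack axioms of \Cref{def:prestack} enter essentially: unitality of $m_M$ uses the unit axioms for $\varphi$ and $m_A$ together with the unit axiom for $f$, while associativity of $m_M$ uses the associativity axiom for $\varphi$, the associativity of $m_A$, the compatibility of $\varphi$ with $m$ recorded in item \ref{def:prestack-functorial-in-A}, and the fact from item \ref{def:prestack-functorial-in-B} that $\varphi$ preserves source and target. I expect this associativity verification to be the main obstacle, since it is the point at which all of the interchange data---$\Delta$, $\varphi$, $m_A$, and $m$---must be shown to cohere; \Cref{cor:B-delta-same-source}, asserting that the comultiplicands of $B$ share a common source, should be the key ingredient making the two ways of reassociating agree. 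Already the well-definedness of $m_M$ (that it factors through the outer cotensor $A \cot{C}(B\cot{D}C)$) relies on the source/target compatibility of $\varphi$ and $f$.

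Finally, for the $\B$-comodule structure, the object coaction is the original $p \colon C \to C \otimes D$, which is a comonoid map by hypothesis. The arrow coaction $\pi_M \colon M \to M \otimes B$ I would define by applying $A \cot{C}(-)$ to the right $B$-coaction $q_*\Delta \colon B \cot{D} C \to (B \cot{D} C) \otimes B$ and then using \Cref{lem:cot-iso} to identify $A \cot{C}\big((B \cot{D} C) \otimes B\big) \cong M \otimes B$. That $\pi_M$ is a coaction follows from $q_*\Delta$ being one; that it is a map over $p$ follows from \Cref{lem:maps-over-p}(2), which records exactly that $q_*\Delta$ is a bicomodule map over $p$. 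It then remains to check that $(p, \pi_M)$ is an internal functor, i.e.\ that $\pi_M$ is compatible with $\sigma_M$, $\tau_M$, $u_M$, and $m_M$; the compatibility with $m_M$ is the most delicate, and should reduce to the multiplicativity of $q_*\Delta$ for the composition on $B \cot{D} C$ together with the coassociativity relating $q_*\Delta$ to $\varphi$.
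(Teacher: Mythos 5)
Your proposal matches the paper's proof in all essentials: the same composition (the internal form of $(a\otimes b)(a'\otimes b') = a(b_{(1)}\cdot a')\otimes b_{(2)}b'$, built from $\Delta$, $\varphi$, $m_A$ and $m$ via the cotensor rearrangements of \Cref{lem:cot-iso} and the maps-over-$p$ of \Cref{lem:maps-over-p}), the same unit $C \cong C\cot{C}(D\cot{D}C) \to A\cot{C}(B\cot{D}C)$, the same appeal to the prestack axioms for unitality and associativity, and the same $\B$-coaction via $q_*\Delta$ (your $A\cot{C}(q_*\Delta)$ is an equivalent packaging of the paper's $\pi\cot{p}(q_*\Delta)$ followed by the collapse $D\cot{D}B\cong B$). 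No gaps worth flagging.
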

	\begin{proof}
		By \Cref{lem:maps-over-p}, $\pi, q_* \Delta$ and $f_*\Delta$ are all maps over $p$, allowing us to define the composite in \Cref{eq:comp-A-rtimes-B}.
		\begin{figure}
			\[
				\begin{tikzcd}[row sep = huge]
					A \cot{C} (B \cot{D} C)
					\cot{C} 
					A \cot{C} (B \cot{D} C)
					\ar[d, "\pi \,\cot{p}\, (q_* \Delta) \,\cot{p}\, \pi \,\cot{p}\, (f_* \Delta)" ]
					\\
					\bigg(A \otimes D \bigg) 
					\cot{C \otimes D} 
					\bigg( (B \cot{D} C) \otimes B \bigg) 
					\cot{C \otimes D} 
					\bigg(A \otimes D\bigg) 
					\cot{C \otimes D} 
					\bigg( C \otimes (B \cot{D} C) \bigg)
					\ar[d, "\cong"]
					\\
					\bigg(A \cot{C} (B \cot{D} C) \cot{C} A \cot{C} C \bigg) 
					\otimes 
					\bigg(D \cot{D} B \cot{D} D \cot{D} (B \cot{D} C) \bigg)
					\ar[d, "\cong"]
					\\
					\bigg( A \cot{C} B \cot{D} A \bigg)
					\otimes
					\bigg( B \cot{D} B \cot{D} C \bigg) \ar[d, "(A \,\cot{C}\, \varphi) \otimes (m \,\cot{D}\, C) " ]
					\\
					\bigg( A \cot{C} A \bigg) \otimes \bigg( B \cot{D} C \bigg) \ar[d, "m \otimes (B \cot{D} C)" ]
					\\
					A \otimes (B \cot{D} C)
				\end{tikzcd}
			\]
			\caption{Composition in the internal category $\A \rtimes \B$}
			\label{eq:comp-A-rtimes-B}		
		\end{figure}			
		In fact, this composite factors through $A \cot{C} (B \cot{D} C)$, giving the multiplication on $\A \rtimes \B$. \todo{Verify}

		The unit of $\A \rtimes \B$ is given by the composite:
		\[
			\begin{tikzcd}[column sep = large]
				C \ar[r, "\cong"] & C \cot{C} (D \cot{D} C) \ar[r, "u \cot{C} ( u \cot{D} C)"] & A \cot{C} (B \cot{D} C)
			\end{tikzcd}
		\]
		These maps are unital and associative (because of \Cref{def:prestack-functorial-in-B,def:prestack-functorial-in-A} in \Cref{def:prestack}, and the fact that $\A$ and $\B$ are internal categories), so $\A \rtimes \B$ is an internal category.

		To see that $\A \rtimes \B$ has the structure of a $\B$-comodule category, note that $C$ already has a $D$-coaction $p$.
		We then take the $B$-coaction on $A \cot{C} (B \cot{D} C)$ to be the composite in \Cref{eq:B-coaction-A-rtimes-B}.
		\begin{figure}
			\[
				\begin{tikzcd}[row sep = large]
					A \cot{C} (B \cot{D} C)
					\ar[d, "\pi \,\cot{C}\, (q_*\Delta)" ]
					\\
					\bigg(A \otimes D \bigg) \cot{C \otimes D} \bigg((B \cot{D} C) \otimes B \bigg)
					\ar[d, "\cong"]
					\\
					\bigg( A \cot{C} (B \cot{D} C) \bigg)  \otimes \bigg(D \cot{D} B \bigg)
					\ar[d, "\cong"]
					\\
					\bigg( A \cot{C} (B \cot{D} C) \bigg)  \otimes B
				\end{tikzcd}
			\]	
			\caption{$\B$-coaction on $\A \rtimes \B$}
			\label{eq:B-coaction-A-rtimes-B}		
		\end{figure}					
	\end{proof}

\section{Coinvariants of comodule categories} \label{sec:coinvariants}
	Although we have not defined what a `cartesian fibered right $\B$-comodule category' should be, we can still verify that the fibers of $\A \rtimes \B$ allow us to recover our original prestack $\A$.
	We first begin by defining the fibers of any right $\B$-comodule category.

	\begin{definition}
		Let $\A$ be a right $\B$-comodule category, with coaction functor $p \colon \A \to \A \otimes \B$. 
		The \fm{coinvariant category} is the coinduction $\A$ along $(D, u)\colon \D \to \B$:
		\[
			\begin{tikzcd}[sep = large]
				\A\cot{\B} \D \ar[r] \ar[d, dotted] \ar[dr, very near start, phantom, "\lrcorner"] & \A \ar[d, dotted]
				\\
				\D \ar[r, "{(D, u)}"] & \B
			\end{tikzcd}
		\]	
		Equivalently, $\A \cot{\B} \D$ is given by the equalizer:
		\[
			\begin{tikzcd}[column sep = huge]
				\A \cot{\B} \D \ar[r, rightarrowtail]
				&[-30pt]
				\A \otimes \D \ar[r, shift left, "{(p, \pi) \otimes \D}"] 
				\ar[r, shift right, "\A \otimes \big(({D,u}) \otimes \D\big)  ({d,\delta})"']
				&[+30pt]
				\A \otimes \B \otimes \D
			\end{tikzcd}
		\]	
	\end{definition}	
	The following lemmas follow almost by definition:
	\begin{lemma}
		The coinvariant category  $\A \cot{\B} \D$ is a $\D$-comodule category.
	\end{lemma}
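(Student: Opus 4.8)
The plan is to recognize $\A \cot{\B} \D$ as the value of the cotensor operation applied to a bicomodule, so that its right $\D$-coaction is inherited directly from the right $\D$-coaction on the factor $\D$, in exact analogy with \Cref{prop:cotensor-coact}. The slogan is that $\D$ is not merely a left $\B$-comodule but a $(\B, \D)$-bicomodule category, and cotensoring a right $\B$-comodule with such a bicomodule yields a right $\D$-comodule.

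First I would exhibit the two coactions on $\D = (D,D)$. As a comonoidal internal category its comultiplication $(d,\delta) \colon \D \to \D \otimes \D$ is an internal functor, and this is exactly a right $\D$-comodule category structure on $\D$ over itself. At the same time, corestriction along the internal functor $(D,u) \colon \D \to \B$ supplies $\D$ with the left $\B$-coaction appearing in the equalizer of the definition. Coassociativity of the comultiplication of $D$, together with the fact that $(D,u)$ is a comonoid map in each component, shows that these two coactions commute, so that $\D$ is a $(\B,\D)$-bicomodule category.

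Next I would compute the cotensor $\A \cot{\B} \D$ componentwise in $\V$: its object comonoid is the cotensor $C \cot{D} D$ of the right $D$-comodule $C$ with the left $D$-comodule $D$, and its morphism comodule is the cotensor $A \cot{B} D$ of the right $B$-comodule $A$ (via $\pi$) with the left $B$-comodule $D$ (via $u$). Since $\V$ is regular, \Cref{prop:cotensor-coact} equips each component with a right $D$-coaction induced by the right $\D$-coaction on the $\D$ factor, and assembling these gives a candidate coaction $(p',\pi') \colon \A \cot{\B} \D \to (\A \cot{\B} \D) \otimes \D$. To conclude I would verify that $(p',\pi')$ really makes $\A \cot{\B} \D$ a $\D$-comodule category, i.e.\ that its object component is a comonoid map, its morphism component is a map over $p'$, and the pair is an internal functor. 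These follow from the functoriality of the cotensor in \Cref{prop:cotensor-maps} and the coherence isomorphism of \Cref{lem:cot-iso}, which present the identity and composition maps of $\A \cot{\B} \D$ as cotensors of the structure maps of $\A$ and $\D$; the coaction then respects them by the same uniqueness-of-induced-maps argument used in the proof of \Cref{prop:cotensor-maps}.

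The main obstacle is precisely the caveat flagged in the remark following the classification of comonoidal internal categories: one cannot invoke \Cref{prop:cotensor-coact} \emph{inside} $\Cat(\V)$, since $\Cat(\V)$ (like $\Comon(\V)$) need not be regular. The genuine content of the proof is therefore to check that both the cotensor and its induced $\D$-coaction decompose into honest componentwise cotensors in $\V$, where regularity does hold, and that these components glue into a single internal functor. Everything beyond this decomposition reduces to coassociativity of the comultiplications and the naturality of $\otimes$, which is why the statement follows almost by definition.
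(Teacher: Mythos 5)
Your proposal is correct and matches the argument the paper intends: the paper leaves this lemma as ``follows almost by definition,'' and the definition in question is exactly the one you spell out, namely that $\D$ is a $(\B,\D)$-bicomodule category via $(d,\delta)$ and corestriction along $(D,u)$, so the cotensor inherits its right $\D$-coaction from the $\D$ factor as in \Cref{prop:cotensor-coact}. You also correctly identify the only real subtlety --- that \Cref{prop:cotensor-coact} cannot be applied inside $\Cat(\V)$ directly, so the coaction must be built componentwise on $C \cot{D} D$ and $A \cot{B} D$ in the regular category $\V$ and then checked to assemble into an internal functor via \Cref{prop:cotensor-maps} and \Cref{lem:cot-iso}.
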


	\begin{lemma}
		The coinvariant category is given by $\A \cot{\B} \D = (C \cong C \cot{D} D,\, A \cot{B} D)$.
	\end{lemma}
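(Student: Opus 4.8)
The plan is to evaluate the defining equalizer of $\A \cot{\B} \D$ separately on objects and on maps. Since an internal functor is a pair consisting of a comonoid map on objects and a comodule map on maps, and the ambient monoidal category is regular, I expect the equalizer of a pair of parallel internal functors to be computed componentwise; granting this, the two parallel functors in the definition are $(p,\pi)\otimes\D$ and $\A\otimes\big((D,u)\otimes\D\big)(d,\delta)$ between $\A\otimes\D=(C\otimes D,\,A\otimes D)$ and $\A\otimes\B\otimes\D=(C\otimes D\otimes D,\,A\otimes B\otimes D)$. First I would read off the two components of each functor. On objects both land in $C\otimes D\otimes D$, giving the pair $p\otimes D$ and $C\otimes d$, using that the object-component of $(D,u)$ is $\mathrm{id}_D$ and that the object-component of the comultiplication of $\D$ is $d$. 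On maps both land in $A\otimes B\otimes D$, giving the pair $\pi\otimes D$ and $A\otimes\big((u\otimes D)\circ d\big)$, using that the map-component of $(D,u)$ is $u$ and that the map-component of the comultiplication of $\D$ is again $d$.

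On objects, the pair $p\otimes D,\ C\otimes d\colon C\otimes D\rightrightarrows C\otimes D\otimes D$ is exactly the pair whose equalizer defines the cotensor $C\cot{D}D$ of the right $D$-comodule $(C,p)$ with the left $D$-comodule $(D,d)$. Since $p$ is a $D$-coaction, coassociativity gives $(p\otimes D)\circ p=(C\otimes d)\circ p$, so $p$ factors uniquely through the equalizer; the counit $e$ of $D$ supplies $C\otimes e$ as a candidate inverse, and the counit axiom for $p$ together with the equalizer relation shows this is a two-sided inverse. This is the standard isomorphism $C\cong C\cot{D}D$ identifying a comodule with its cotensor over the coacting comonoid.

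On maps, I would note that $(u\otimes D)\circ d\colon D\to B\otimes D$ is precisely the left $B$-coaction on $D$ obtained by corestricting the left $D$-coaction $d$ along the comonoid map $u\colon D\to B$, which is a comonoid map because $\B$ is comonoidal. Hence the pair $\pi\otimes D,\ A\otimes\big((u\otimes D)\circ d\big)$ is the defining pair of the cotensor $A\cot{B}D$ of the right $B$-comodule $(A,\pi)$ with this left $B$-comodule $D$. Combining the two components then yields $\A\cot{\B}\D=(C\cot{D}D,\,A\cot{B}D)\cong(C,\,A\cot{B}D)$, with the internal-category and $\D$-comodule structures being those already provided by the preceding lemma.

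The one step that is not purely formal, and the one I would justify most carefully, is the claim that the equalizer in $\Cat(\V)$ is computed componentwise: one must check that the object-equalizer $C\cot{D}D\rightarrowtail C\otimes D$ carries a comonoid structure, that the map-equalizer $A\cot{B}D\rightarrowtail A\otimes D$ carries a bicomodule structure over it, and that the restricted identity and composition assemble these into an internal category equalizing the two functors. This is exactly where regularity of $\V$ enters: because $\otimes$ preserves equalizers, the relevant tensor powers and cotensors of the two equalizers are themselves equalizers, so the comultiplication, counit, unit and composition of $\A\otimes\D$ restrict uniquely along the monic legs by the universal property.
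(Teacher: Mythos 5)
Your proof is correct and is exactly the argument the paper intends: the paper offers no proof beyond the remark that the lemma "follows almost by definition," and your componentwise unpacking of the defining equalizer — identifying the object part as $C \cot{D} D \cong C$ via the counit and the map part as $A \cot{B} D$ with $D$ coacted on by $(u \otimes D)\circ d$ — is the intended content, with the appeal to regularity for computing the equalizer componentwise being the right point to flag.
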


	Given an arbitrary $\B$-comodule category, it is unlikely that its coinvariant category has the structure of a prestack over $\D$.
	However, when the $\B$-comodule category is of the form $\A \rtimes \B$ for a prestack $\A$, we have:

	\begin{theorem} \label{thm:internal-coinvariant}
		Let $\A$ be a prestack over $\B$ and let $\A \rtimes \B$ be the corresponding right $\B$-comodule category.
		Then the coinvariant category $(\A \rtimes \B) \cot{\B} \D$ is a prestack over $\B$, which is moreover isomorphic to $\A$.
	\end{theorem}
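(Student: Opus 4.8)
The plan is to follow the classical picture: the coinvariant category is the ``fiber over the identity morphisms'', and the smash product of \Cref{thm:Gr-is-cat} is built precisely so that passing to this fiber undoes the twisting by $\B$. By the second coinvariant lemma applied to $\A \rtimes \B = \big(C, A \cot{C} (B \cot{D} C)\big)$, the coinvariant category is
\[
    (\A \rtimes \B) \cot{\B} \D = \Big( C \cong C \cot{D} D, \;\; \big( A \cot{C} (B \cot{D} C) \big) \cot{B} D \Big),
\]
where the right $B$-coaction on the morphism object is the one of \Cref{eq:B-coaction-A-rtimes-B} and $D$ is regarded as a $B$-comodule via the unit $u \colon D \to B$. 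So the whole theorem rests on producing a natural isomorphism $\big( A \cot{C} (B \cot{D} C) \big) \cot{B} D \cong A$ and then checking it is compatible with the unit, composition, $\D$-coaction, and prestack actions.

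First I would compute the morphism object. Inspecting \Cref{eq:B-coaction-A-rtimes-B}, the $B$-coaction is induced by $q_* \Delta$ on the factor $B \cot{D} C$, the contribution of $\pi$ on $A$ being consumed by the isomorphism $D \cot{D} B \cong B$; in particular the ``value'' recorded by the coaction lives entirely in the $B \cot{D} C$ factor. Using that $\V$ is regular, so that the equalizers defining the two cotensors commute with one another and with $\otimes$, I would slide the coinvariant past the outer cotensor,
\[
    \big( A \cot{C} (B \cot{D} C) \big) \cot{B} D \;\cong\; A \cot{C} \big( (B \cot{D} C) \cot{B} D \big),
\]
reducing everything to the single identity $(B \cot{D} C) \cot{B} D \cong C$. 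This is the heart of the argument: it asserts that the $B$-coinvariants of $B \cot{D} C$ recover $C$. Conceptually, $q_* \Delta$ records the $\B$-morphism component, and cotensoring with $D$ over $B$ via $u$ forces that component to be an identity, collapsing $B \cot{D} C$ back through $D \cot{D} C \cong C$. I expect to prove it by writing down the two mutually inverse maps — one assembled from $u \cot{D} C$ and $D \cot{D} C \cong C$, the other from the counit $e \colon B \to \1$ — and checking each equalizes the relevant legs, using that $q, s, t, u$ are comonoid maps and that $\Delta$ is counital. Composing with the unit law $A \cot{C} C \cong A$ of the monoidal category $({}_C\Comod_C, \cot{C}, C)$ then yields the isomorphism on morphism objects.

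Next I would promote this to an isomorphism of internal categories and of $\D$-comodule categories. The unit and composition of the coinvariant are the restrictions to coinvariants of the smash-product unit and of the composite in \Cref{eq:comp-A-rtimes-B}. The unitality axioms of \Cref{def:prestack} — precisely the squares involving $u \cot{D} C$ and $u \cot{D} A$ — are what guarantee that, once the $\B$-morphism components are forced to be identities, this composite collapses to the composition $m$ of $\A$ and the unit collapses to the unit $u$ of $\A$. For the $\D$-comodule structure, the object coaction $p$ is literally unchanged, while under the isomorphism above the morphism coaction restricts to $\pi$; here I would invoke \Cref{lem:B-D-comod}, where $\pi$ was identified with the corestrictions $q_* \tau$ and $q_* \sigma$, together with the maps-over-$p$ identities of \Cref{lem:maps-over-p}.

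Finally, to see that the coinvariant is a prestack over $\B$ isomorphic to $\A$, I would transport the actions $f$ and $\varphi$ along the isomorphism and verify the axioms of \Cref{def:prestack}. Because $f$ and $\varphi$ are exactly the data encoded in the smash-product composition through \Cref{def:prestack-functorial-in-B,def:prestack-functorial-in-A}, they reappear on the coinvariant, so the isomorphism is one of prestacks essentially by construction. The main obstacle I anticipate is the central isomorphism $(B \cot{D} C) \cot{B} D \cong C$, and, more bureaucratically, tracking the associativity and \Cref{lem:cot-iso} coherence isomorphisms carefully enough that the reduction of the smash-product composition to $m$ is genuinely natural rather than merely objectwise. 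These are the diagram chases where the symmetry $\braid$ and \Cref{cor:B-delta-same-source} enter.
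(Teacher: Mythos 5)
Your proposal follows essentially the same route as the paper: identify the object comonoid as $C$, observe that the $B$-coaction lives in the $B \cot{D} C$ factor, collapse $A \cot{C} (B \cot{D} C) \cot{B} D$ through $A \cot{C} (D \cot{D} C) \cong A \cot{C} C \cong A$, and then transfer the prestack structure of $\A$ across the isomorphism. You are more explicit than the paper about the associativity of cotensors and the compatibility checks, but the underlying argument is the same.
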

	\begin{proof}
		First observe that the comonoid of objects for $\A, \A\rtimes \B$ and $(\A \rtimes \B) \cot{\B} \D$ are all $C$.
		On morphisms, recall that the $B$-coaction on $A \cot{C} (B \cot{D} C)$ is given by the copy of $B$ sitting inside $B \cot{D} C$.
		Thus \todo{Draw diagram for first isomorphism}
		\begin{align*}
			A \cot{C} (B \cot{D} C) \cot{B} D &\cong A \cot{C} (D \cot{D} C) \\
											  &\cong A \cot{C} C \\
											  &\cong A.
		\end{align*}
		So $\A$ and $(\A \rtimes \B) \cot{\B} \D$ are isomorphic categories.
		We may then transfer the prestack structure of $\A$ over to $(\A \rtimes \B) \cot{\B} \D$.
	\end{proof}

\section{Further work} \label{sec:further-work}
	In this paper, we have seen that the smash product (a.k.a. the Grothendieck construction) for split prestacks $F \colon B^\op \to \Cat$ generalizes well to the non-cartesian internal setting, as long as one is willing to relax the definition of what it means to be a prestack.

	Several assumptions were made that reduce the scope of the results in this paper.
	Firstly, although $\V$ is not cartesian, it is assumed to be \emph{symmetric} monoidal rather than merely braided monoidal. 
	The symmetry assumption yielded certain convenient but not crucial lemmas (at least in the author's opinion).
	It is thus believable that smash products as defined in this paper should still exist in the braided monoidal setting.

	We have also assumed that the comonoids of objects of both the base $\B$ and the prestack $\A$ are cocommutative.
	This assumption seems to be more crucial, and not something that can be easily done away with.
	Indeed, \Cref{lem:maps-over-p} -- the main technical result -- holds only because we assumed that $C$ and $D$ were cocommutative.

	It remains to be seen if a similar construction can be carried out for internal categories with non-cocommutative comonoids of objects, e.g.\ the quantum categories of \cite{chikhladze2011category}.
	We note that smash products have been defined for weak bialgebras \cite{nikshych2000duality}, and that these are bimonoids in an appropriate duoidal category, so a possible next step would be to define smash products for prestacks internal to a duoidal category.

	% In a different direction, the prestacks we have defined here are in fact the analogue of \emph{split} prestacks.
	% It should not be too difficult define non-split prestacks and to formulate smash products for them.
	% The resulting comodule categories should then be `fibrations across the $2$-functor $\Comod(\Cat(\V)) \to \Cat(\V)$', rather than fibrations in the $2$-category $\Cat(\V)$ (which are themselves `fibrations across the $2$-functor $\cod \colon \Arr(\Cat(\V)) \to \Cat(\V)$').
	% This would require a notion of `fibration across a $2$-functor', which is work-in-progress.
	% It should then be possible to define \emph{categories} of `fibered' $\B$-comodule algebras and non-split prestacks over $\B$, and these should be equivalent.

\bibliography{../chapters/Biblio}
\end{document}